\title{Intersection Bodies of Polytopes}
\author{Katalin Berlow, Marie-Charlotte Brandenburg,\\ Chiara Meroni and Isabelle Shankar}
\date{}
\pgfplotsset{width=7cm, compat=1.10}
\newcommand{\R}{\mathbb{R}}
\newcommand{\N}{\mathbb{N}}
\newcommand{\Q}{\mathbb{Q}}
\newcommand{\co}{\text{co}}
\newcommand{\conv}{\text{conv}}
\newcommand{\cube}[1]{C^{(#1)}}
\DeclareMathOperator{\vol}{Vol}
\DeclareMathOperator{\sgn}{sgn}
\newtheorem{theorem}{Theorem}[section]
\newtheorem{cor}[theorem]{Corollary}
\newtheorem{lemma}[theorem]{Lemma}
\newtheorem{prop}[theorem]{Proposition}
\theoremstyle{definition}
\newtheorem{definition}[theorem]{Definition}
\newtheorem{example}[theorem]{Example}
\theoremstyle{remark}
\newtheorem{remark}[theorem]{Remark}
\begin{document}

\maketitle
\begin{abstract}
\noindent 
We investigate the intersection body of a convex polytope using tools from combinatorics and real algebraic geometry.  In particular, we show that the intersection body of a polytope is always a semialgebraic set and provide an algorithm for its computation.  Moreover, we compute the irreducible components of the algebraic boundary and provide an upper bound for the degree of these components.
\end{abstract}

\section{Introduction}

\begin{figure}[b]
    \centering
    \includegraphics[width=0.29\textwidth]{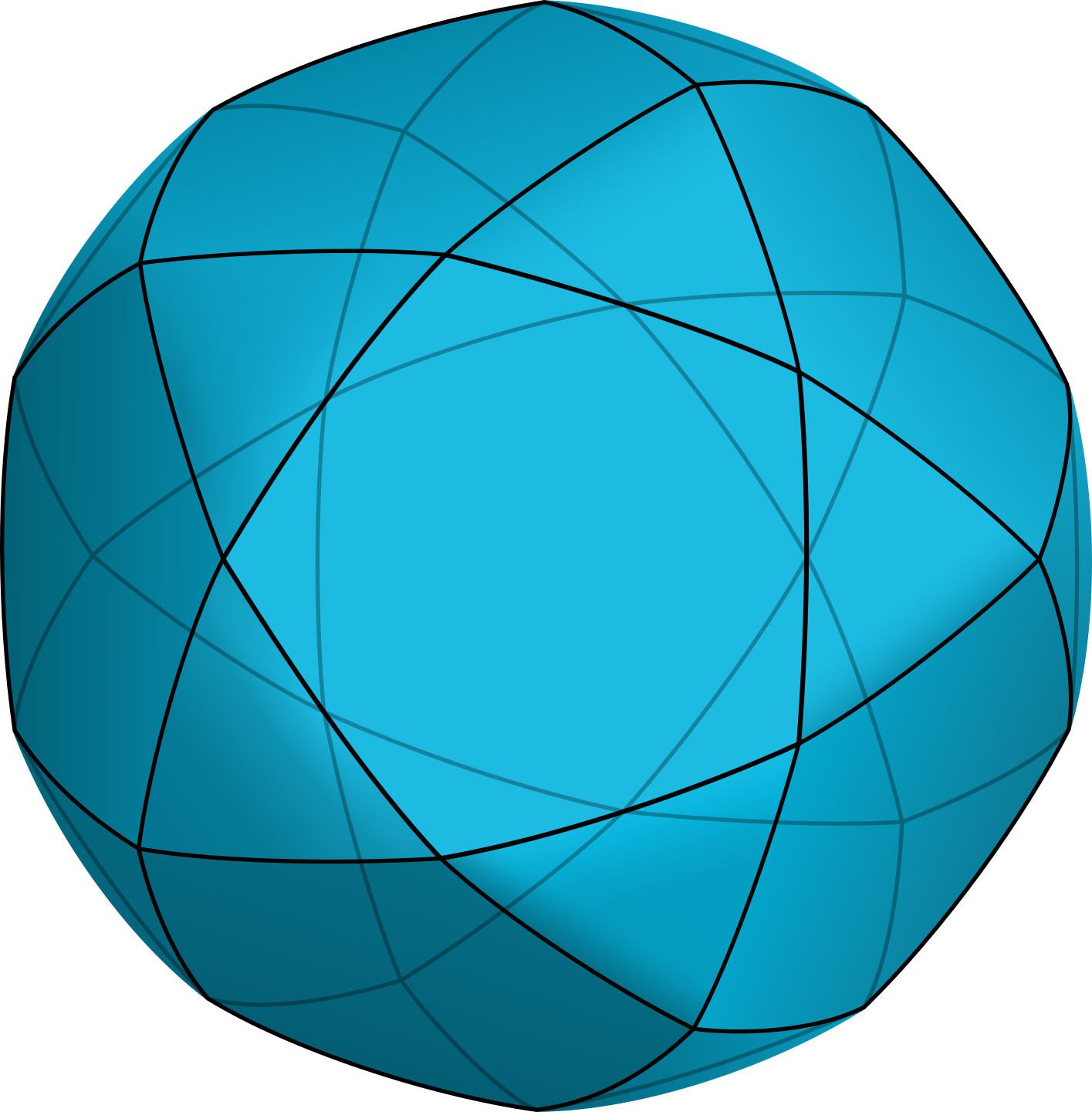}
    \caption{The intersection body of the icosahedron.}
    \label{fig:int_icosahedron}
\end{figure}
This paper studies intersection bodies from the perspective of real algebraic geometry.
Originally, intersection bodies were defined by Lutwak \cite{Lutwak1988} in the context of convex geometry. 
In view of the notion of $(d-1)$-dimensional cross-section measures  
    and the related concepts of associated
    bodies (such as intersection bodies, cross-section bodies, and projection  
    bodies), intersection bodies play an essential role in geometric  
    tomography (see \cite[Chapter 8]{G2006} and \cite[Section 2.3]{Horst1994}). In particular, we mention here the Busemann-Petty  
    problem which asks if one can compare the volumes of two convex bodies by comparing the volumes of their sections \cite{Gardner19941,Gardner19942,GKS1999,Koldobsky1998,Zhang1999}.
% This paper studies intersection bodies from the perspective of real algebraic geometry.
% Originally, intersection bodies were defined by Lutwak \cite{Lutwak1988} in the context of convex geometry. They play an important role in the Busemann-Petty problem which asks if one can compare the volumes of two convex bodies by comparing the volumes of their sections \cite{Gardner19941,Gardner19942,GKS1999,Koldobsky1998,Zhang1999}.
Moreover, Ludwig showed that the unique non-trivial GL$(d)$-covariant star-body-valued valuation on convex polytopes corresponds to taking the intersection body of the dual polytope \cite{Ludwig2006}. Due to such results, the knowledge on properties of  
    intersection bodies interestingly contributes
    also to the (still not systematized) theory of starshaped sets, see  
    Section 17 of the exposition \cite{HHMM2020}.\\

Recently, there is increased interest in investigating convex geometry from an algebraic point of view \cite{BPT2013,sinnAlgebraicBoundariesConvex2015, Sturmfels:duality, Ranestad2011}.
In this article,
we will focus on the intersection bodies of polytopes from this perspective. It is known that in $\R^2$, the intersection body of a centrally symmetric polytope centered at the origin is the same polytope rotated by $\pi/2$ and dilated by a factor of $2$ (see e.g. \cite[Theorem 8.1.4]{G2006}). Moreover, if $K$ is a full-dimensional convex body in $\R^d$ centered at the origin, then so is its intersection body \cite[Chapter 8.1]{G2006}.
But what do these objects look like in general? In $\R^d$, with $d\geq 3$, they cannot be polytopes \cite{campi_1999,zhang_1999} and they may not even be convex. In fact, for every convex body $K$, there exists a translate of $K$ such that its intersection body is not convex. This happens because of the important role played by the origin in the construction of the intersection body. \\

Our main contribution is \Cref{theorem:IP_semialgebraic}, which states that the intersection body of a polytope is a semialgebraic set, i.e. a subset of $\R^d$ defined by a boolean combination of polynomial inequalities. 
The proof relies on two key facts.  First, the volume of a polytope can be computed using determinants.  Second, the combinatorial type of the intersection of a polytope with a hyperplane is fixed for each region of a certain central hyperplane arrangement. In \Cref{section:IP_semialgebraic}, we prove semialgebraicity for the intersection body of polytopes containing the origin, and we generalize the result to arbitrary polytopes in \Cref{sec:non-convex-IP}.
In \Cref{section:algorithm}, we present an algorithm to compute the radial function of the intersection body of a polytope.  An implementation is available at \cite{mathrepo}.
In \Cref{section:algebraic_boundary}, we describe the algebraic boundary of the intersection body, which is a hypersurface consisting of several irreducible components, each corresponding to a region of the aforementioned hyperplane arrangement. \Cref{cor:degree} gives a bound on the degree of the irreducible components.
\Cref{section:cube} focuses on the intersection body of the $d$-cube centered at the origin (\Cref{fig:int_cube}).

\section{The Intersection Body of a Polytope is Semialgebraic}
\label{section:IP_semialgebraic}

In convex geometry it is common to use functions in order to describe a convex body, i.e. a non-empty convex compact subset of $\R^d$. This can be done e.g. by the radial function. A more detailed introduction can be found in \cite{schneider}.

\begin{definition}
    Given a convex body $K\subset \R^d$, the \emph{radial function} of $K$ is
    \begin{equation*}
        \rho_K : \R^d \to \R, \quad x \mapsto \max \left\{ \lambda \in \R \mid \lambda x \in K \right\}.
    \end{equation*}
\end{definition}
As a convention $\rho_K(0)$ is $\infty$ when $0\in K$ and it is $0$ otherwise. An immediate consequence of the definition is that $\rho_K(cx) = \frac{1}{c} \rho_K(x)$ for $c>0$. Therefore, we can equivalently define the radial function on the unit sphere $S^{d-1}$, and then extend to the whole space using the previously mentioned relation. Throughout this paper we will use the following convention: $x$ denotes a vector in $\R^d$ whereas $u$ denotes a vector in $S^{d-1}$. With the observation that we can restrict to the sphere, we define the intersection body of $K$ by its radial function, which is given by the volume of the intersections of $K$ with hyperplanes through the origin.

\begin{definition}
Let $K$ be a convex body in $\R^d$. 
Its \emph{intersection body} is defined to be the set $IK = \{ x\in \R^d \mid \rho_{IK}(x) \geq 1 \}$
where the radial function (restricted to the sphere) is 
$$ \rho_{IK}(u) = \text{Vol}_{d-1}(K\cap u^\perp)$$
for $u \in S^{d-1}$. We denote by $u^\perp$ the hyperplane through the origin with normal vector $u$, and by $\vol_i$ the $i$-dimensional Euclidean volume, for $i \leq d$. 
\end{definition}

We begin our investigation by considering the intersection body of polytopes which contain the origin.  
For instance, \Cref{fig:int_icosahedron} displays the intersection body of an icosahedron centered at the origin.
If the origin belongs to the interior of the polytope $P$, then $\rho_P$ is continuous and hence $\rho_{IP}$ is also continuous \cite{G2006}. Otherwise we may have some points of discontinuity which correspond to unit vectors $u$ such that $u^\perp$ contains a facet of $P$; there are finitely many such directions. The intersection body is well defined, but there may arise subtleties when dealing with the boundary. However, we will see later (in \Cref{rmk:discontinuity_radial}) that for our purposes everything works out.
In the following we use notions from polytope theory, such as \emph{zonotopes} and \emph{combinatorial types}. For further background on polytopes we refer the reader to \cite{Z1995}.

\begin{example}\label{ex:cube}
We will use the cube as an ongoing example to illustrate the key concepts used throughout the paper.
Let $P$ be the $3$-dimensional cube $[-1,1]^3 \subseteq \R^3$.
If we intersect $P$ with hyperplanes $u^\perp$, for $u\in S^2$, we can observe that there are two possible combinatorial types for $P\cap u^\perp$: it is either a parallelogram (\Cref{fig:section_parallelogram}) or a hexagon (\Cref{fig:section_hexagon}). 
\begin{figure}[t]
    \centering
    \begin{subfigure}{0.48\textwidth}
        \centering
        \includegraphics[width=0.52\textwidth]{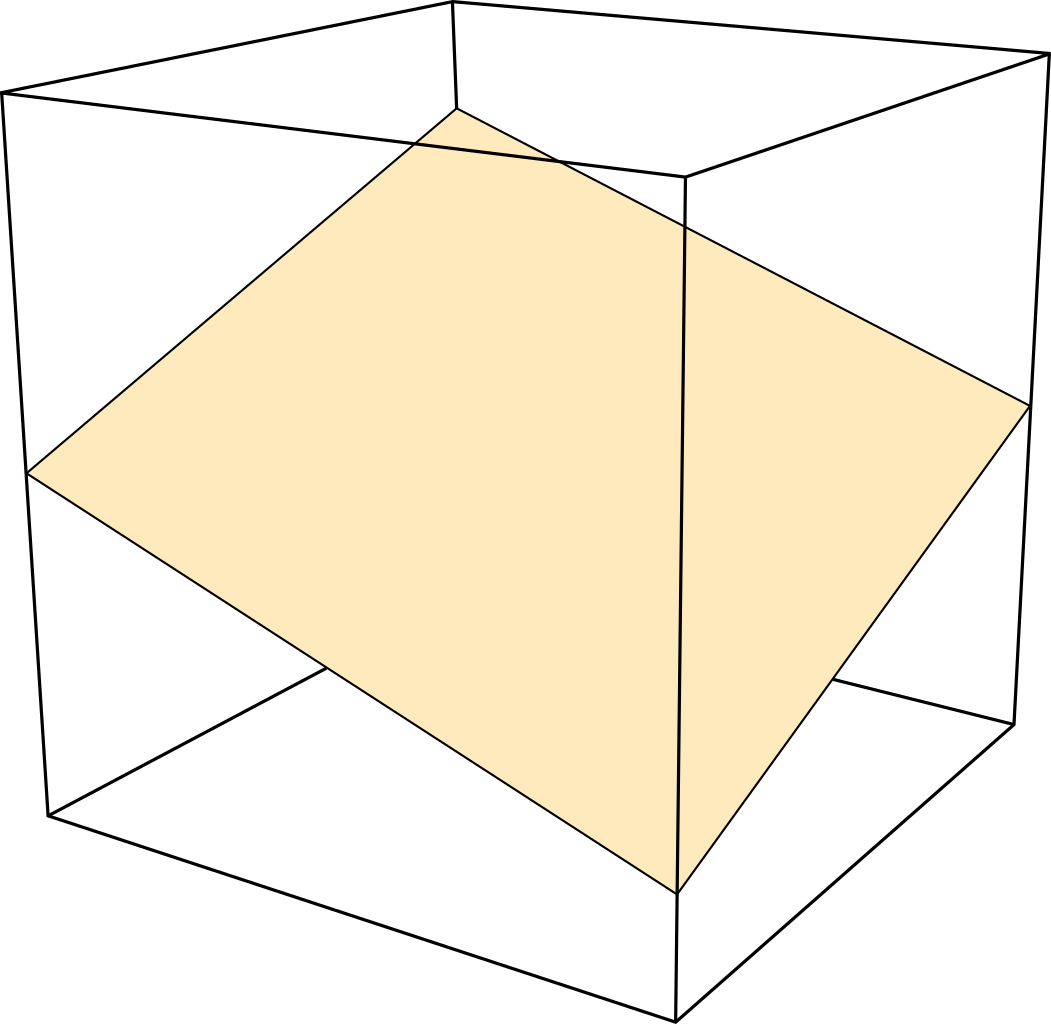}
        \caption{}
        \label{fig:section_parallelogram}
    \end{subfigure}
    \begin{subfigure}{0.48\textwidth}
        \centering
        \includegraphics[width=0.52\textwidth]{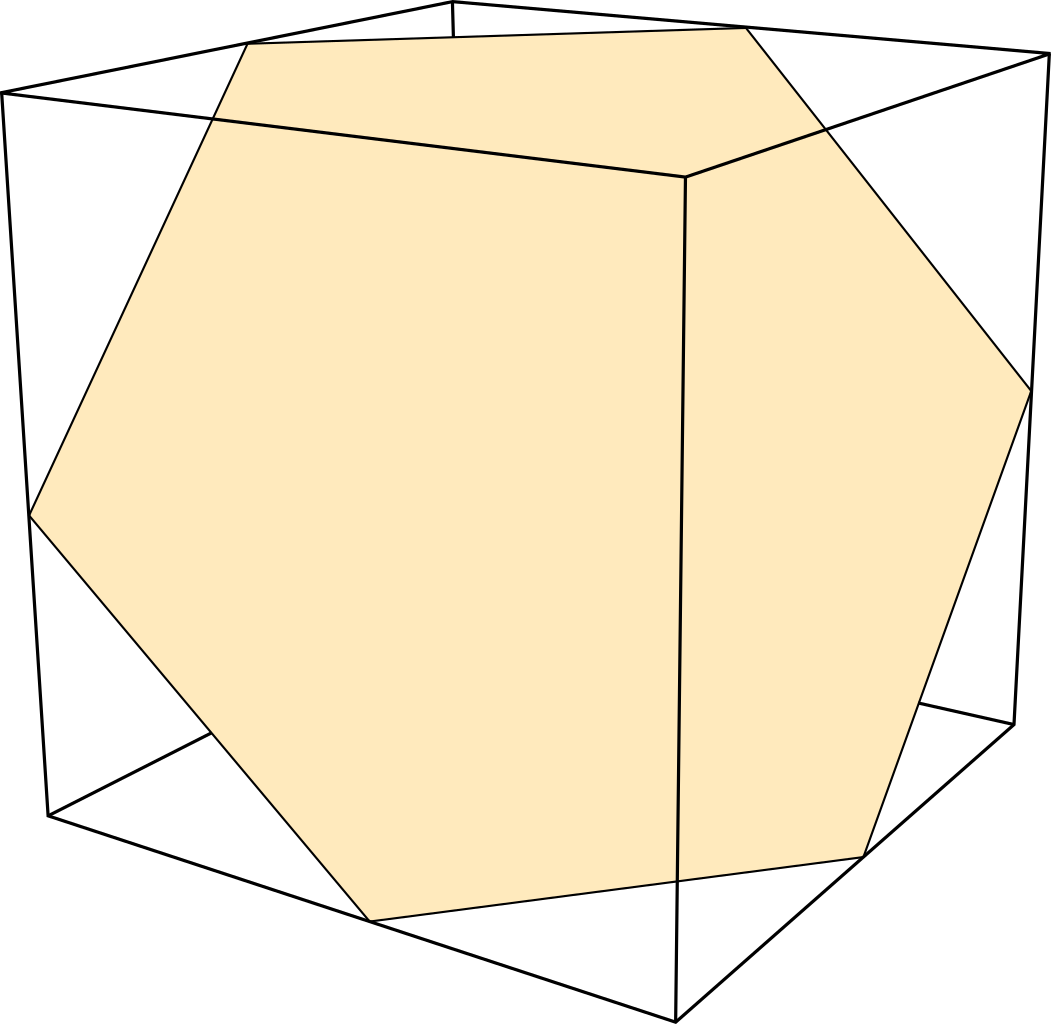} 
        \caption{}
        \label{fig:section_hexagon}
    \end{subfigure}
    \caption{The two combinatorial types of hyperplane sections of the $3$-cube.}
\end{figure}
There are finitely many regions of the sphere for which the combinatorial type stays the same (see \Cref{lemma:subdivision_of_sphere}).  Using this we can parameterize the area of the parallelogram or hexagon with respect to the vector $u$ to construct the radial function of $IP$.  Indeed, as will be shown in the proof of \Cref{thm:intersection_body_semialgbraic}, this can be equivalently written to provide a semialgebraic description of the intersection body.
In particular, if the intersection is a square, then the radial function in a neighborhood of that point will be a constant term over a coordinate variable, e.g. $\frac{4}{z}$. On the other hand, when the intersection is a hexagon, the radial function is a degree two polynomial over $xyz$.
The intersection body is convex as promised by the theory and displayed in \Cref{fig:int_cube}.
We continue with this in \Cref{ex:cube2}.

\end{example}

\begin{lemma}\label{lemma:subdivision_of_sphere}
Let $P$ be a full-dimensional polytope in $\R^d$. Then there exists a central hyperplane arrangement $H$ in $\R^d$ whose maximal open chambers $C$ satisfy the following property. For all $x \in C$, the hyperplane $x^\perp$ intersects a fixed set of edges of $P$ and the polytopes $Q = P \cap x^\perp$ are of the same combinatorial type. 
\end{lemma}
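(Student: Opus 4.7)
The plan is to take $H$ to be the central hyperplane arrangement consisting of the hyperplanes $v^\perp$, one for each vertex $v$ of $P$. Within a single maximal open chamber $C$ of $H$, for every vertex $v$ the sign of $\langle x, v \rangle$ is constant as $x$ varies in $C$, because crossing $v^\perp$ is precisely what flips that sign. Equivalently, each open chamber of $H$ corresponds to a fixed assignment of signs to the vertices of $P$, and $x^\perp$ contains no vertex of $P$ for any $x \in C$.

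From this sign constancy both conclusions of the lemma follow. For the first, an edge $[v_i, v_j]$ of $P$ is crossed by $x^\perp$ if and only if $\langle x, v_i \rangle$ and $\langle x, v_j \rangle$ have opposite signs (both nonzero, since $x^\perp$ avoids every vertex of $P$). Hence the set of edges of $P$ intersected by $x^\perp$ is determined by the sign pattern and is therefore the same for every $x \in C$.

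For the combinatorial type of $Q = P \cap x^\perp$, I would invoke the standard description of hyperplane sections of polytopes: the nonempty proper faces of $Q$ are in bijection with the faces of $P$ having vertices on both sides of $x^\perp$, with dimensions shifted down by one and incidences inherited from the face lattice of $P$. This subposet is read off directly from the sign pattern of $(\sgn \langle x, v \rangle)_v$, so it is the same for every $x \in C$, and hence so is the combinatorial type of $Q$.

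The main technical step is justifying the hyperplane-section description carefully: one must check that for each face $F$ of $P$ with vertices of both signs, $F \cap x^\perp$ is a polytope of dimension $\dim F - 1$, and that these polytopes assemble into $Q$ with the claimed face incidences. Both are consequences of convexity together with the fact that $x^\perp$ avoids all vertices of $P$, which is exactly what being in an open chamber of $H$ guarantees.
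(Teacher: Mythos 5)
Your arrangement is exactly the one the paper uses, and the conclusion is the same; where you differ is the justification inside a chamber. The paper argues informally by continuity: perturbing $x$, the set of edges met by $x^\perp$ can only change when $x^\perp$ hits a vertex, which happens precisely on the hyperplanes $v^\perp$. You instead make the discrete invariant explicit: within a chamber the sign vector $(\sgn\langle x, v\rangle)_v$ is constant, the edges crossed are those whose endpoints have opposite signs, and the face poset of $Q$ is read off from which faces of $P$ have vertices on both sides. This is genuinely tighter than the paper's phrasing (and correctly identifies the piece of combinatorics — the induced face poset — that makes ``same combinatorial type'' a theorem rather than a picture), so the extra detail is a gain, not a detour.

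One small technical point you should handle: the paper writes $H = \{v^\perp \mid v \text{ a vertex of } P,\; v \neq 0\}$, excluding the origin. If the origin happens to be a vertex of $P$, then $0^\perp$ is all of $\R^d$ rather than a hyperplane, and your claim that ``$x^\perp$ contains no vertex of $P$ for any $x\in C$'' fails — every central hyperplane contains $0$. In that case edges incident to $0$ all meet $x^\perp$ at the single point $0$, and the sign pattern has a forced zero entry. The argument still goes through (the sign pattern on the nonzero vertices is constant in a chamber, and the origin contributes a fixed vertex of $Q$), but as written the ``both nonzero'' step needs that caveat. This is the only gap; the rest is sound.
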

\begin{proof}
Let $x$ be a generic vector of $\R^d$ and consider $Q = P\cap x^\perp$. The vertices of $Q$ are the points of intersection of $x^\perp$ with the edges of $P$. 
Perturbing $x$ continuously, the intersecting edges (and thus the combinatorial type) remain the same, unless the hyperplane $x^\perp$ passes through a vertex $v$ of $P$. 
This happens if and only if $\langle x,v \rangle = 0 $ and thus the set of normal vectors of such hyperplanes is given by 
$
 v^\perp = \{x \in \R^d \mid \langle x, v \rangle = 0 \}.
$
Taking the union over all vertices yields the central hyperplane arrangement
$$
    H = \{ v^\perp \mid v \text{ is a vertex of } P \text{ and $v$ is not the origin}\}.
$$
Then each open region $C$ of the complement of $H$ contains points $x$ such that $x^\perp$ intersects a fixed set of edges of $P$.
\end{proof}

The proof of \Cref{lemma:subdivision_of_sphere} implies that the number of regions we are interested in is the number of chambers of the central hyperplane arrangement $H$. Let $m \!=\! \left(\# \{ v \text{ is a vertex of } P\}/\!\sim\right)$ where $v\sim w$ if $v=\pm w$.
Then we have an upper bound for such a number:
\begin{equation*}
    \sum_{j=0}^d \binom{m}{j} 
\end{equation*}
given by the number of chambers of a generic arrangement \cite[Prop. 2.4]{stanley_introductionhyperplanearrangements}. 

\begin{remark}\label{rem:zonotope}
    We note that there are several ways to view the hyperplane arrangement $H$ in \Cref{lemma:subdivision_of_sphere}.
    For example, since the vertices of $P$ are the normal vectors of the facets of the dual polytope $P^\circ$, we can describe $H$ as the collection of linear hyperplanes which are parallel to facets of $P^\circ$. We also note that $H$ is the normal fan of a zonotope whose edge directions are orthogonal to the hyperplanes of $H$. The fan $\Sigma$ induced by the hyperplane arrangement $H$ is the normal fan of the zonotope 
    \begin{equation*}\label{eq:zonotope}
            Z(P) = \sum_{v \text{ is a vertex of } P} [-v, v].
    \end{equation*}
    We will call this zonotope the \emph{zonotope associated to $P$}. As will be clarified later in \Cref{rem:zonotope_combinatorics}, the dual body of $Z(P)$ plays an important role in the visualization and the combinatorics of the intersection body $IP$.
\end{remark}

\begin{theorem}\label{thm:intersection_body_semialgbraic}
Let $P\subseteq \R^d$ be a full-dimensional polytope containing the origin.  Then $IP$, the intersection body of $P$, is semialgebraic. 
\end{theorem}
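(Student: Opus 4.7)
My plan is to combine \Cref{lemma:subdivision_of_sphere} with the fact that the volume of a polytope can be computed by determinants to produce, on each chamber of the arrangement $H$, an explicit rational expression for the radial function $\rho_{IP}$, from which a semialgebraic description of $IP$ follows.

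Fix an open chamber $C$ of the arrangement $H$ from \Cref{lemma:subdivision_of_sphere}. For $x\in C$, the section $Q_x := P\cap x^\perp$ has a fixed combinatorial type, and its vertices are the intersections of $x^\perp$ with a fixed set of edges $[v_i,v_j]$ of $P$ (those with $\langle x,v_i\rangle$ and $\langle x,v_j\rangle$ of opposite signs). Parameterizing such an edge and solving $\langle x,(1-t)v_i+tv_j\rangle=0$ yields the explicit expression
\[
p_{ij}(x) \;=\; \frac{\langle x,v_i\rangle\,v_j - \langle x,v_j\rangle\,v_i}{\langle x,v_i\rangle - \langle x,v_j\rangle},
\]
whose denominator is nonvanishing on $C$ because the sign of $\langle x,v_k\rangle$ is constant there for every vertex $v_k$ of $P$. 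I would then fix once and for all a triangulation of the combinatorial model of $Q_x$; by the constancy of the combinatorial type on $C$, this lifts to a bona fide triangulation of $Q_x$ for every $x\in C$ whose $(d-1)$-simplices $\sigma$ have the $p_{ij}(x)$ as vertices.

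The $(d-1)$-volume of a simplex $\sigma\subset x^\perp$ with vertices $q_0,\dots,q_{d-1}$ can be computed by adjoining the unit normal $x/\|x\|$ as the last column, giving
\[
\vol_{d-1}(\sigma) \;=\; \frac{1}{(d-1)!\,\|x\|}\,\bigl|\det\bigl[\,q_1-q_0,\ \dots,\ q_{d-1}-q_0,\ x\,\bigr]\bigr|.
\]
Choosing a coherent orientation of $x^\perp$ throughout the connected chamber $C$ (say, by always taking $x/\|x\|$ as the positive normal), the signs of these determinants agree across the triangulation, so summation yields a single rational function $R_C$ with $\|x\|\,\vol_{d-1}(Q_x)=R_C(x)$, whose denominator is a product of the nonvanishing linear forms $\langle x,v_i-v_j\rangle$. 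Since $\rho_{IP}(x)=\vol_{d-1}(P\cap x^\perp)/\|x\|$ for $x\neq 0$, the condition $x\in IP$ restricted to $C$ becomes $R_C(x)\geq\|x\|^2$, which, after clearing the sign-constant denominator of $R_C$ and using $\|x\|^2=\sum_\ell x_\ell^2$, is a polynomial inequality in $x$. Combined with the linear (in)equalities defining the polyhedral cone $C$, this exhibits $IP\cap C$ as a semialgebraic subset of $\R^d$. Since $H$ has finitely many chambers, and the hyperplanes of $H$ themselves can be handled either by a limiting argument or by running the same analysis on their induced lower-dimensional arrangements, $IP$ is a finite union of semialgebraic sets, hence semialgebraic.

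The step I expect to be most delicate is the uniform-sign claim used in assembling $R_C$: one must verify that the triangulation and the orientation of $x^\perp$ can be chosen so that every simplex determinant carries the same sign throughout all of $C$, so that $\vol_{d-1}(Q_x)$ is genuinely a rational function of $x$ rather than merely a sum of absolute values of such. Once this is secured, the remainder is routine bookkeeping together with the standard closure properties of semialgebraic sets.
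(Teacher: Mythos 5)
Your proof is correct and follows the same strategy as the paper's: fix a chamber $C$ of the arrangement from \Cref{lemma:subdivision_of_sphere}, express the vertices of $Q_x = P\cap x^\perp$ as rational functions of $x$ (your formula for $p_{ij}(x)$ matches the paper's), triangulate $Q_x$, write the volume as a sum of $d\times d$ determinants, and then homogenize to get a boolean combination of polynomial inequalities. The one place the paper exploits the hypothesis $0\in P$ is in the choice of triangulation: it cones at the origin (triangulate each facet of $Q_x$ not containing $0$, then join to $0$), so every simplex has $0$ as a vertex and its volume is $\frac{1}{(d-1)!\,\|x\|}\,\lvert\det[v_{i_1},\dots,v_{i_{d-1}},x]\rvert$ without having to subtract a base vertex $q_0$ as in your displayed formula. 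Finally, the step you flag as delicate is actually automatic and requires no careful choice of orientation: for a combinatorially fixed triangulation, each $\det[q_1-q_0,\dots,q_{d-1}-q_0,x]$ is a continuous nowhere-vanishing function on the connected open chamber $C$, so it has constant sign there; one simply determines each per-simplex sign at a sample point and absorbs it into $R_C$, which is precisely how the paper's expression $\sum_j \lvert\det M_j(u)\rvert$ becomes a single rational function $p(u)/q(u)$ on $U=C\cap S^{d-1}$.
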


\begin{proof}
Fix a region $U=C\cap S^{d-1}$ for an open cone $C$ from \Cref{lemma:subdivision_of_sphere}. Then for every $u\in U$ the hyperplane $u^\perp$ intersects $P$ in the same set of edges.
Let $v$ be a vertex of $Q=P\cap u^\perp$. Then there is an edge $[a,b]$ of $P$ such that $v = [a,b] \cap u^\perp$. This implies that $v = \lambda a + (1-\lambda) b$ for some $\lambda \in (0,1)$ and $\langle v,u\rangle =0$.
From this we get that 
$$\lambda = \frac{\langle b, u \rangle}{\langle b-a, u \rangle}$$
which implies that
$$
v = \frac{\langle b, u \rangle}{\langle b-a, u \rangle}(a-b)+b = \frac{\langle b, u \rangle a - \langle a, u \rangle b}{\langle b-a, u \rangle}.
$$
In this way we express $v$ as a function of $u$ (for fixed $a$ and $b$).
Let $v_1, \ldots, v_n$ be the vertices of $Q$ and let $[a_i,b_i]$ be the corresponding edges of $P$.

We now consider the following triangulation of $Q$:
first, triangulate each facet of $Q$ that does not contain the origin, without adding new vertices (this can always be done e.g. by a regular subdivision using a generic lifting function, cf. \cite[Prop. 2.2.4]{loera_triangulations}). For each $(d-2)$-dimensional simplex $\Delta$ in this triangulation, consider the $(d-1)$-dimensional simplex $\conv(\Delta,0)$ with the origin. This constitutes a triangulation $T = \{\Delta_j : j\in J\}$ of $Q$, in which the origin is a vertex of every simplex. \\
Restricting to $U$, the radial function of the intersection body $IP$ in direction $u$ is the volume of $Q$, and hence given by 
$$\rho_{IP}(u) = \vol (Q) = \sum_{j\in J} \vol(\Delta_j).$$
We can thus compute $\rho_{IP}(u)$ as
$$\rho_{IP}(u) = \sum_{j\in J} \frac{1}{(d-1)!} \left| \det \left(M_j(u)\right) \right|,$$
where
$$M_j(u) = \begin{bmatrix}
v_{i_1}(u) \\
v_{i_2}(u) \\
\vdots \\
v_{i_{d-1}}(u) \\
u
\end{bmatrix} = \begin{bmatrix}
\frac{\langle b_{i_1}, u \rangle a_{i_1} - \langle a_{i_1}, u \rangle b_{i_1}}{\langle b_{i_1}-a_{i_1}, u \rangle} \\
\vdots \\
\frac{\langle b_{i_{d-1}}, u \rangle a_{i_{d-1}} - \langle a_{i_{d-1}}, u \rangle b_{i_{d-1}}}{\langle b_{i_{d-1}}-a_{i_{d-1}}, u \rangle} \\
u
\end{bmatrix}$$
and the row vectors $\{v_{i_1},v_{i_2}, \ldots, v_{i_{d-1}}\}$ (along with the origin) are vertices of the simplex $\Delta_j$ of the triangulation. Therefore, we obtain an expression $\rho_{IP}(u) = \frac{p(u)}{q(u)}$ for some polynomials $p,q\in \R[u_1,\ldots ,u_d]$ without common factors, for $u\in U$. With the same procedure applied to all regions $U_i = C_i \cap S^{d-1}$, for $C_i$ as in \Cref{lemma:subdivision_of_sphere}, we obtain an expression for $\rho\vert_{S^{d-1}}$ that is continuous and piecewise a quotient of two polynomials $p_i,q_i$.
It follows from the definition of the radial function that 
\begin{equation*}
    IP = \left\lbrace x \in \R^d \mid \rho_{IP}\left( x \right) \geq 1 \right\rbrace
    = \left\lbrace x \in \R^d \mid \frac{1}{\|x\|}\rho_{IP}\left( \frac{x}{\|x\|} \right) \geq 1 \right\rbrace.
\end{equation*} 
Notice that for every $j\in J$ we have the following equality:
\begin{equation*}
    \det \left( M_j\left( \frac{x}{\|x\|} \right) \right) = 
    \det \begin{bmatrix}
    v_{i_1}\left( \frac{x}{\|x\|} \right) \\[6pt]
    \vdots \\[6pt]
    v_{i_{d-1}}\left( \frac{x}{\|x\|} \right) \\[6pt]
    \frac{x}{\|x\|}
    \end{bmatrix} =
    \det \begin{bmatrix}
    v_{i_1}\left( x \right) \\[9.5pt]
    \vdots \\[9.5pt]
    v_{i_{d-1}}\left( x \right) \\[9.5pt]
    \frac{x}{\|x\|}
    \end{bmatrix} = 
    \frac{1}{\|x\|} \det\left( M_j\left( x \right)\right)
\end{equation*}
and therefore, if $\frac{x}{\|x\|} \in U$, 
\begin{equation*}
    \rho_{IP}\left( \frac{x}{\|x\|} \right) = \sum_{j\in J} \frac{1}{ (d-1)!} \left| \det \left(M_j\left( \frac{x}{\|x\|} \right)\right) \right| = \frac{1}{\|x\|} \sum_{j\in J} \frac{1}{ (d-1)!} \left| \det \left(M_j\left( x \right)\right)\right| = \frac{p(x)}{\|x\| q(x)}.
\end{equation*}
Because the radial function is a semialgebraic map, by quantifier elimination the intersection body is also semialgebraic.
More explicitly, let $I$ be the set of indices $i$ such that  $\rho_{IP}\big|_{U_i} \neq 0$.
Then we can write the intersection body as 
\begin{align*}
    IP &= \bigcup_{i\in I} \left\lbrace x\in \overline{C}_i \mid \frac{1}{\|x\|^2} \cdot \frac{p_i(x)}{q_i(x)} \geq 1  \right\rbrace \\
    &= \bigcup_{i\in I} \left\lbrace  x\in \overline{C}_i \mid  \| x \|^2 q_i(x) - p_i(x) \leq 0  \right\rbrace.
\end{align*} 
This expression gives exactly a semialgebraic description of $IP$.
\end{proof}

\begin{example}\label{ex:icosahedron1}
Let $P$ be the regular icosahedron in $\R^3$, whose $12$ vertices are all the even permutations of $\left(0, \pm \frac{1}{2}, \pm (\frac{1}{4} \sqrt{5} + \frac{1}{4})\right)$. The associated hyperplane arrangement has $32=12+20$ chambers. The first type of chambers is spanned by five rays and the radial function of $IP$ is given by a quotient of a quartic and a quintic, defined over $\Q(\sqrt{5})$. In the remaining twenty chambers $\rho_{IP}$ is a quintic over a sextic, again with coefficients in $\Q(\sqrt{5})$. This intersection body is the convex set shown in \Cref{fig:int_icosahedron}. We will continue the analysis of $IP$ in \Cref{ex:icosahedron2}.
\end{example}

The theory of intersection bodies assures that the intersection body of a centrally symmetric convex body is again a centrally symmetric convex body, as it happens in \Cref{ex:cube} and in \Cref{ex:icosahedron1}.
On the other hand, given any polytope $P$ (indeed this holds more generally for any convex body) there exists a translation of $P$ such that $IP$ is not convex. This is the content of the next example.

\begin{example}\label{ex:cube_vertex}
Let $P$ be the cube $[-1,1]^3 + (1,1,1)$, so that the origin is a vertex of $P$. The hyperplane arrangement associated to $P$ divides the space in $32$ chambers. In two of them the radial function is $0$. In six regions the radial function has the following shape (up to permutation of the coordinates and sign):
\begin{equation*}
    \rho(x,y,z) = \frac{4}{z}.
\end{equation*}
There are then $18 = 6 + 12$ regions in which the radial function looks like
\begin{equation*}
    \rho(x,y,z) = \frac{2x}{yz} \qquad \hbox{or} \qquad \rho(x,y,z) = \frac{2(x+2z)}{yz}.
\end{equation*}
In the remaining six regions we have
\begin{equation*}
    \rho(x,y,z) = \frac{2(x^2+2xy+y^2+2xz+z^2)}{xyz}.
\end{equation*}
\Cref{fig:cube_vertex} shows two different points of view of $IP$, which is in particular not convex.
\end{example}

\begin{figure}[h]
    \centering
    \begin{subfigure}{0.49\textwidth}
    \centering
    \includegraphics[width=0.58\textwidth]{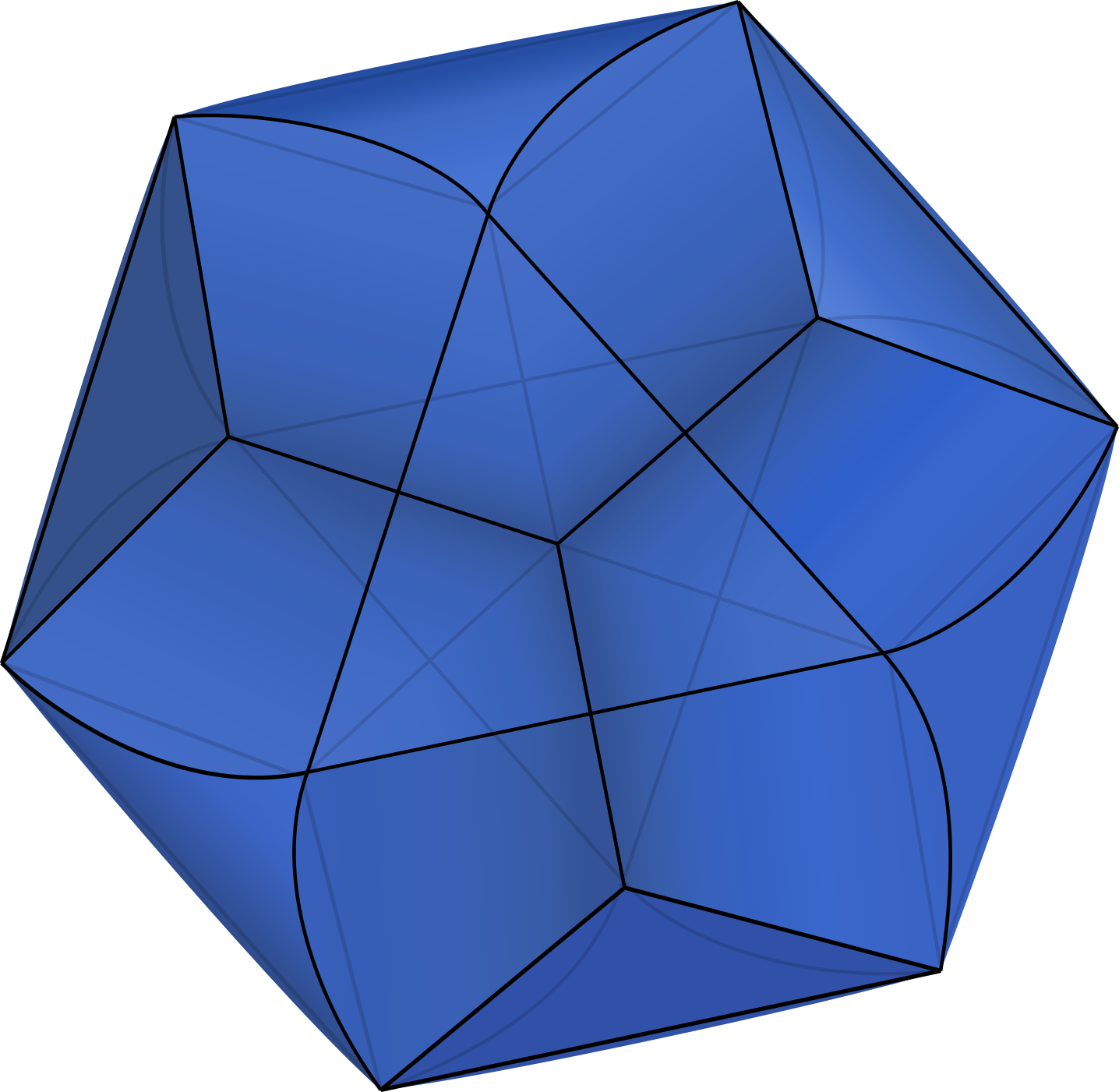}
    \caption{}
    \label{fig:cube_vertex_1}
    \end{subfigure}
    \begin{subfigure}{0.49\textwidth}
    \centering
    \includegraphics[width=0.76\textwidth]{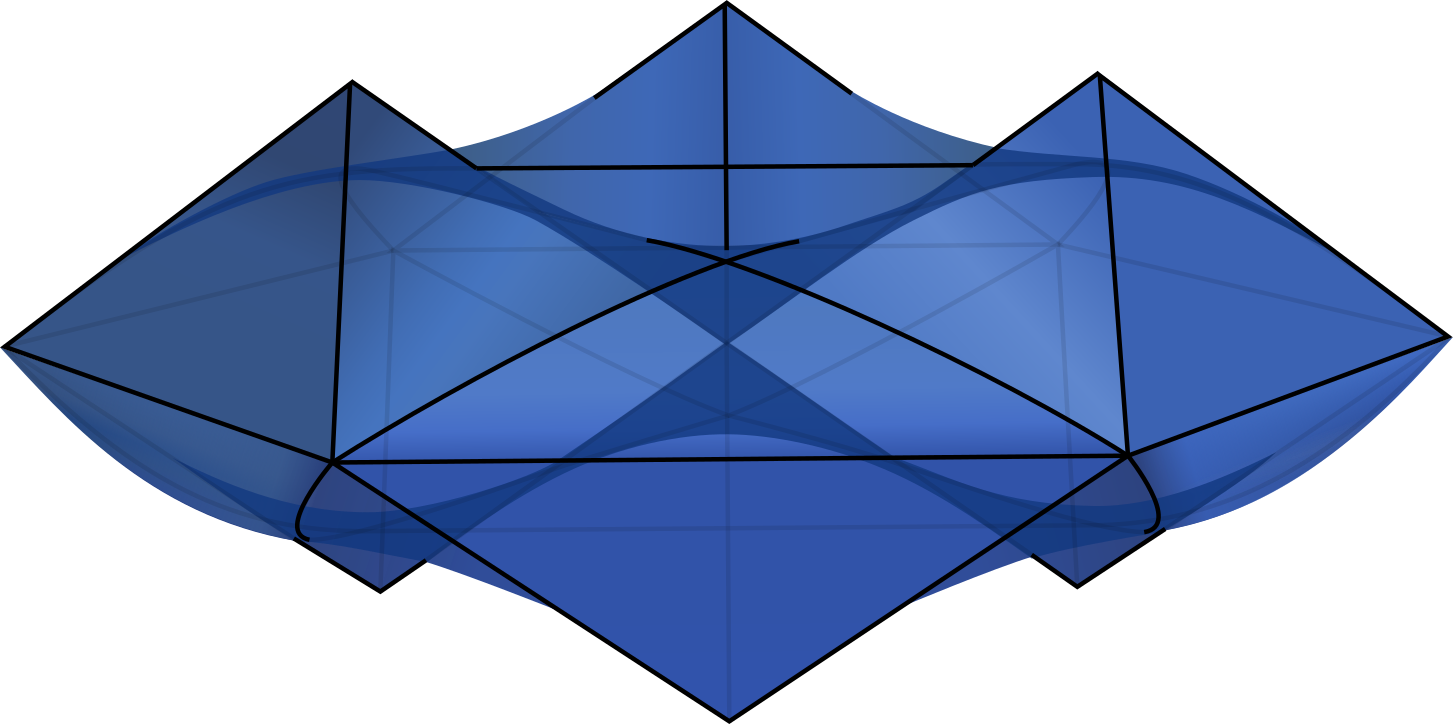}
    \caption{}
    \label{fig:cube_vertex_2}
    \end{subfigure}
    \caption{The intersection body of the cube in \Cref{ex:cube_vertex} from two different points of view.}
    \label{fig:cube_vertex}
\end{figure}

\section{Non-convex Intersection Bodies}\label{sec:non-convex-IP}

The proof of \Cref{thm:intersection_body_semialgbraic} relies on the fact that the origin is in the polytope. However, if the origin is not contained in $P$, we can still find a semialgebraic description of $IP$ by adjusting how we compute the volume of $P\cap u^\perp$.  The remainder of this section will be dedicated to proving this.

\begin{lemma}\label{lem:triangles}
Let $P\subset \R^d$ be a full-dimensional polytope, and let $\mathcal{F}$ be the set of its facets. Let $p$ be a point outside of $P$. For each face $F\in \mathcal{F}$, let $\Hat{F}$ denote the set $\operatorname{conv}(F\cup \{p\})$. Then the following equality holds:
$$\vol(P) = \sum_{F\in \mathcal{F}} \sgn(F)\vol(\Hat{F})$$
where $\sgn(F)$ is $1$ if $P$ and $p$ belong to the same halfspace defined by $F$, and $-1$ otherwise. 
\end{lemma}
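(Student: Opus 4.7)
The plan is to apply the divergence theorem to $P$ with a vector field centered at the external point $p$. Specifically, I would use the vector field $V(x) = \tfrac{1}{d}(x-p)$ on $\R^d$, which has constant divergence $\nabla \cdot V \equiv 1$. Integrating over $P$ and applying the divergence theorem gives
$$\vol(P) \;=\; \int_P \nabla \cdot V \, dx \;=\; \int_{\partial P} V \cdot n \, dS \;=\; \sum_{F \in \mathcal{F}} \int_F \tfrac{1}{d}(x-p) \cdot n_F \, dS,$$
where $n_F$ denotes the outward unit normal to the facet $F$.

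The next step is to evaluate each boundary integral. Since $n_F$ is constant on $F$ and $F$ lies in the hyperplane $\operatorname{aff}(F)$, the scalar $(x-p)\cdot n_F$ is constant for $x \in F$ and equals the signed distance from $p$ to $\operatorname{aff}(F)$, measured against the outward normal. Because $n_F$ points away from $P$, this signed distance is positive precisely when $p$ lies in the closed halfspace containing $P$, i.e. when $\sgn(F) = 1$, and negative otherwise. Letting $h_F$ denote the unsigned distance from $p$ to $\operatorname{aff}(F)$, we obtain $(x-p) \cdot n_F = \sgn(F) \, h_F$, so the boundary integral reduces to $\tfrac{1}{d}\sgn(F)\, h_F\, \vol_{d-1}(F)$.

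To conclude, I would use the standard pyramid-volume formula $\vol(\Hat{F}) = \tfrac{1}{d}\, h_F\, \vol_{d-1}(F)$ for the pyramid $\conv(F \cup \{p\})$ with base $F$ and apex $p$, which is valid regardless of whether $F$ itself is a simplex. Summing over $F \in \mathcal{F}$ then yields the desired identity. The main obstacle I expect is careful sign bookkeeping, namely verifying that the sign produced by the dot product $(x-p) \cdot n_F$ under the outward-normal convention genuinely agrees with the combinatorial definition of $\sgn(F)$ from the statement. A more elementary alternative would bypass calculus entirely and instead establish the pointwise identity $\sum_F \sgn(F)\, \chi_{\Hat{F}}(q) = \chi_P(q)$ for almost every $q \in \R^d$ by tracking how the ray from $p$ through $q$ crosses the facets of $P$ and using that the ray starts outside $P$; however, this requires a case analysis on the parity of crossings that the divergence-theorem route neatly sidesteps.
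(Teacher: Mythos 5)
Your proof is correct, and it takes a genuinely different route from the paper's. The paper argues combinatorially: it forms $\Hat{P} = \operatorname{conv}(P\cup\{p\})$, splits the facets into those whose $P$-side halfspace contains $p$ (call them $\mathcal{F}^+_p$) and the rest ($\mathcal{F}^-_p$), and then proves two covering identities, $\Hat{P} = \bigcup_{F\in\mathcal{F}^+_p}\Hat{F}$ and $\overline{\Hat{P}\setminus P} = \bigcup_{F\in\mathcal{F}^-_p}\Hat{F}$, via a ray-from-$p$ argument; volume additivity of $\vol(\Hat{P}) = \vol(P) + \vol(\Hat{P}\setminus P)$ then gives the signed sum. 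You instead integrate $\nabla\cdot V = 1$ for $V(x)=\tfrac1d(x-p)$, apply the divergence theorem, observe that $(x-p)\cdot n_F$ is constant on each facet and equals $\sgn(F)\,h_F$, and invoke the pyramid formula $\vol(\Hat{F}) = \tfrac1d h_F \vol_{d-1}(F)$. The sign bookkeeping you flag as a concern is fine: with $F\subset\{x: \langle x, n_F\rangle = c_F\}$ and $P\subset\{x:\langle x, n_F\rangle\le c_F\}$, one has $(x-p)\cdot n_F = c_F - \langle p,n_F\rangle$, which is nonnegative exactly when $p$ lies in the $P$-side halfspace, matching the statement's $\sgn(F)$ (the degenerate case $p\in\operatorname{aff}(F)$ contributes zero volume either way). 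Your approach is shorter and sidesteps the case analysis of which pyramids overlap where, at the cost of invoking calculus; the paper's is elementary and has the side benefit of making the two families of pyramids and their geometric meaning explicit, which is closer in spirit to the triangulation machinery used elsewhere in the paper. Your sketched ``elementary alternative'' via the pointwise identity $\sum_F \sgn(F)\chi_{\Hat F} = \chi_P$ a.e.\ by ray-crossing parity is essentially a pointwise repackaging of the paper's set-decomposition argument.
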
 

\begin{proof}
Let $\Hat{P} = \operatorname{conv}(P\cup\{p\})$ and denote by $\mathcal{F}^+_p$ the set of facets $F$ of $P$ for which the halfspace defined by $F$ containing $P$ also contains $p$, possibly on its boundary. Let $\mathcal{F}^-_p = \mathcal{F}\setminus \mathcal{F}^+_p$. 

First we will show that $\Hat{P} = \bigcup_{F\in \mathcal{F}^+_p} \Hat{F}$. The inclusion $\bigcup_{F\in \mathcal{F}^+_p} \Hat{F}\subseteq \Hat{P}$ follows immediately from convexity. To see the opposite direction, let $q\in \Hat{P}$ and consider $r$ to be the ray starting at $p$ and going through $q$. Either $r$ intersects $P$ only along its boundary, or there are some intersection points also in the interior of $P$. In the first case $r\cap P \subset F$ and so $q\in \Hat{F}$ for some face $F$, that by convexity must be in $\mathcal{F}^+_p$.
On the other hand, if the ray $r$ intersects the interior of the polytope $P$, denote by $a$ the farthest among the intersection points:
\begin{equation*}
    \|a-p\| = \max \{\|\alpha-p\| \mid \alpha\in P\cap r\}.
\end{equation*}
Let $F_a$ be a facet containing $a$. Then, $q$ is contained in the convex hull of $F_a \cup \{p\}$, i.e. $\Hat{F}_a$.  From the definition of $a$ it follows that the halfspace defined by $F_a$ containing $p$ must also contain $P$, so $F_a\in \mathcal{F}^+_p$ and our statement holds.

Next, we will show that $\bigcup_{F\in \mathcal{F}^-_p} \Hat{F} = \overline{\Hat{P} \setminus P}$. The pyramid $\Hat{F}$ is contained in the closed halfspace defined by $F$ which contains $p$. By the definition of $\mathcal{F}^-_p$, this halfspace does not contain $P$ thus $\Hat{F}\cap P = F$. Also, $\Hat{F}~\subseteq~\Hat{P} $ so we have that $\Hat{F}\subseteq~\overline{\Hat{P}\setminus P}$ and hence  $\bigcup_{F\in \mathcal{F}^-_p} \Hat{F}~\subseteq~\overline{\Hat{P} \setminus P}$. 
Conversely, let $q\in \overline{\Hat{P}\setminus P}$. If $q~=~p$ we are done, so assume $q \neq p$. Then, $q = \lambda p + (1-\lambda) b $ for some $b\in P$, $\lambda \in [0,1)$. Let $a$ be the point at which the segment from $p$ to $b$ first intersects the boundary of $P$, i.e.
\begin{equation*}
    \|a-p\| = \min \{\|\alpha-p\| \mid \alpha\in P, \alpha = tp+(1-t)b \hbox{ for } t\in [0,1)\}.
\end{equation*}
Then by construction there exists a facet $F_a \in \mathcal{F}^-_p$ containing $a$, such that $q\in\Hat{F}_a$.
Thus, we have that $$ \vol(\bigcup_{F\in \mathcal{F}^+_p}\Hat{F}) = \vol(\Hat{P}) = \vol(\Hat{P}\setminus P) + \vol(P) = \vol(\bigcup_{F\in \mathcal{F}^-_p}\Hat{F}) + \vol(P).$$ 
If $F_1 \neq F_2$ and $F_1, F_2 \in \mathcal{F}^+_p$ or $F_1, F_2 \in \mathcal{F}^-_p$, then the volume of $\Hat{F}_1\cap \Hat{F}_2$ is zero, therefore
$$\sum_{F\in \mathcal{F}^+_p}\vol(\Hat{F}) = \sum_{F\in \mathcal{F}^-_p}\vol(\Hat{F}) + \vol (P)$$ and the claim follows.
\end{proof}

\begin{theorem}\label{theorem:IP_semialgebraic}
Let $P\subset \R^d$ be a full-dimensional polytope.  Then $IP$, the intersection body of $P$, is semialgebraic. 
\end{theorem}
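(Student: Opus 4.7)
The plan is to adapt the proof of \Cref{thm:intersection_body_semialgbraic} to the case $0 \notin P$, replacing the triangulation of $Q = P\cap u^\perp$ from the origin (which fails when $0 \notin Q$) with the signed pyramid decomposition provided by \Cref{lem:triangles}. The partition of $S^{d-1}$ furnished by \Cref{lemma:subdivision_of_sphere} still applies since that lemma assumes only that $P$ is full-dimensional; thus on each region $U = C\cap S^{d-1}$ the combinatorial type of $Q$ is constant, and its vertices are the rational functions
\[
v_i(u) \;=\; \frac{\langle b_i,u\rangle a_i - \langle a_i,u\rangle b_i}{\langle b_i - a_i,u\rangle}
\]
of $u$, exactly as in the previous proof, since that derivation used only that $v_i$ lies on the edge $[a_i, b_i]$ of $P$ and on $u^\perp$.

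Fixing a region $U$ and $u \in U$, I would apply \Cref{lem:triangles} to the $(d-1)$-dimensional polytope $Q \subset u^\perp$ with auxiliary point $p = 0$, obtaining
\[
\vol_{d-1}(Q) \;=\; \sum_{F \in \mathcal{F}_Q} \sgn(F)\, \vol_{d-1}(\conv(F \cup \{0\})).
\]
For each facet $F$ of $Q$, I would triangulate $F$ without adding new vertices and cone each resulting $(d-2)$-simplex to $0$; this produces a decomposition of $\conv(F\cup\{0\})$ into $(d-1)$-simplices whose volumes are absolute values of determinants in the coordinates of the $v_i(u)$ together with $u$, and are therefore rational functions of $u$.

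The essential new observation is that the signs $\sgn(F)$ are constant on $U$. Each facet $F$ of $Q$ arises as $u^\perp \cap F'$ for a facet $F'$ of $P$ supported by a hyperplane $\{x : \langle a,x\rangle = c\}$ with $P \subseteq \{\langle a,x\rangle \leq c\}$; since $\langle a,0\rangle = 0$, the origin lies in this closed halfspace precisely when $c \geq 0$, a condition depending only on $P$ and not on $u$. Because the combinatorial type of $Q$ is fixed on $U$, the list of contributing facets $F'$ is fixed too, so every $\sgn(F)$ is determined by $P$ alone, and in particular is locally constant.

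Assembling these pieces, $\rho_{IP}|_U$ is a signed sum of absolute values of rational functions of $u$, and so is a semialgebraic function. The homogenization step from the end of the proof of \Cref{thm:intersection_body_semialgbraic} (rewriting $\rho_{IP}(x/\|x\|)$ using the homogeneity of each $v_i$, clearing denominators, and applying quantifier elimination) then gives a semialgebraic description of $IP$ of the shape $\bigcup_i \{x \in \overline{C}_i : \|x\|^2 q_i(x) - p_i(x) \leq 0\}$. The main subtlety I expect is the bookkeeping of the signs $\sgn(F)$ together with the absolute values of the determinants, but since every such sign is resolved within each region by the argument above, this is a combinatorial rather than a conceptual obstacle.
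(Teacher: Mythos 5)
Your proof takes the same approach as the paper: apply \Cref{lem:triangles} with $p = 0$ to decompose $\vol_{d-1}(Q)$ into signed volumes of pyramids $\conv(F\cup\{0\})$, triangulate each facet $F$ of $Q$, and compute each pyramid volume as a sum of determinants in the rational vertex coordinates $v_i(u)$, then homogenize and clear denominators exactly as in \Cref{thm:intersection_body_semialgbraic}. The one step you make explicit that the paper leaves implicit --- that $\sgn(F)$ is constant on each region $U$ because it is determined by the sign of the constant $c$ in the supporting hyperplane $\{\langle a, x\rangle = c\}$ of the facet $F'$ of $P$ giving rise to $F$, a quantity depending only on $P$ --- is correct and a worthwhile detail to record.
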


\begin{proof}
What remains to be shown is that $IP$ is semialgebraic in the case when the origin is not contained in $P$, and hence it is not contained in any of its sections $Q = P \cap u^\perp$. From \Cref{lem:triangles}, with $p = 0 \in \R^d$ we have that 
$$
\vol(Q) = \sum_{F \text{ facet of } Q} \sgn(F) \vol(\Hat{F})
$$ 
where $\Hat{F}$ is the convex hull of $F$ and the origin. Let $T_F = \{\Delta_j : j\in J_F\}$
be a triangulation of $F$. We can calculate as in the proof of \Cref{thm:intersection_body_semialgbraic}
$$
\vol(\Hat{F)} = \sum_{j\in J_F} \frac{1}{(d-1)!} |\det M_j|
$$ 
where $M_j$ is the matrix whose rows are the vertices of the simplex $\Delta_j \in T_F$ and $u$.
We then follow the remainder of the proof of \Cref{thm:intersection_body_semialgbraic} to see that the intersection body is semialgebraic.
\end{proof}

\section{The Algorithm}
\label{section:algorithm}

The proofs from \Cref{thm:intersection_body_semialgbraic} and \Cref{theorem:IP_semialgebraic} lead to an algorithm to compute the radial function of the intersection body of a polytope.
In this section, we describe the algorithm.
By \Cref{rem:zonotope}, the regions $C$ in which $\rho(x)|_C = \frac{p(x)}{\| x \|^2 q(x)}$ for fixed polynomials $p(x)$ and $q(x)$ are defined by the normal fan of the zonotope $Z(P)$. First, we compute the radial function for each of these cones individually, by applying \Cref{alg:rho-for-region}. \\

\begin{algorithm}[H]
\caption{Computing the radial function for a fixed region $C$}\label{alg:rho-for-region}
\begin{algorithmic}[1]
	\item[]
	\REQUIRE A full-dimensional polytope $P$ in $\R^d$.
	\REQUIRE A maximal open cone $C$ of the normal fan of $Z(P)$.
	\ENSURE The radial function $\rho(x)$ of the intersection body $IP$ restricted to $C$.
	\STATE  Let $\mathcal F$ be the collection of facets of $P$ such that for all $u\in U = C \cap S^{d-1}$ and $F\!\in\! \mathcal F$ holds: $\dim(F\cap u^\perp) = \dim(P)-2$ and $0 \not \in F$.
	\STATE Let $Q = P \cap u^\perp\!, u\!\in\! U$. Triangulate $F\cap u^\perp$ for $F \in \mathcal{F}$, i.e. all facets of $Q$ not contai- \\ ning the origin. Let $\mathcal{T}$ be the collection of all maximal cells of these triangulations. \label{alg-step:triangulation}
	\FOR {each cell $\Delta \in \mathcal T$}
	\STATE Let $v_1, \dots, v_{d-1}$ be the vertices of $\Delta$ in orientation-preserving order.
	\STATE For $i=1,\dots,d-1$, let $e_i=\conv(a_i,b_i)$ be the edge of $P$ such that $e_i\cap u^\perp = v_i$.  
	\STATE Let $x = (x_1,\dots,x_n)$ be a vector with indeterminates $x_1,\dots, x_n$. Let $M_\Delta$ be the $(d\times d)$-matrix with $i$th row $\frac{\langle b_{i}, x\rangle a_{i} - \langle a_{i}, x\rangle b_{i}}{\langle b_{i}-a_{i}, x \rangle}$ and last row $x$.
	\IF { $ \conv(\mathbf 0, \Delta)$ intersects the interior of $P$}
	\STATE Define $\sgn(\Delta) = 1$
	\ELSE {}
	\STATE Define $\sgn(\Delta) = -1$
	\ENDIF
	\ENDFOR
	\RETURN $\frac{1}{\| x \| ^2 (d-1)!}\sum_{\Delta\in \mathcal T} \sgn(\Delta)\det(M_\Delta)$
\end{algorithmic} 
\end{algorithm} 

This algorithm has as output the rational function  $\rho(x)|_C = \frac{p(x)}{\| x \|^2 q(x)}$.
Iterating over all regions yields the final \Cref{alg:global_rho}.\\

\begin{algorithm}[H]
\caption{Computing the radial function of $IP$}\label{alg:global_rho}
\begin{algorithmic}[1]
	\item[]
	\REQUIRE A full-dimensional polytope $P$ in $\R^d$.
	\ENSURE The radial function $\rho(x)$ of the intersection body $IP$.
	\STATE Let $\Sigma$ be the polyhedral fan from \Cref{rem:zonotope}.
	\FOR {each maximal open region $C$ of $\Sigma$}
	 \STATE Compute $\rho|_C$ via \Cref{alg:rho-for-region}.
	\ENDFOR
	\RETURN $\left( \frac{1}{\| x \| ^2 (d-1)!}\sum_{\Delta\in \mathcal T} \sgn(\Delta)\det(M_\Delta), \ C \right)$ for $C \in \Sigma$
\end{algorithmic} 
\end{algorithm}

An implementation of these algorithms for \texttt{SageMath 9.2} \cite{sagemath} and \texttt{Oscar 0.8.2-DEV} \cite{OSCAR} can be found in \url{https://mathrepo.mis.mpg.de/intersection-bodies}. Note that in step \ref{alg-step:triangulation} of \Cref{alg:rho-for-region}, the implementation uses a regular subdivision of the facets of the polytope $Q$ by lifting the vertices $v_1, \dots, v_m$ along the moment curve $(t^1,\dots,t^m)$ with $t=3$.

\section{Algebraic Boundary and Degree Bound}
\label{section:algebraic_boundary}

In order to study intersection bodies from the point of view of real algebraic geometry we need to introduce our main character for this section, the algebraic boundary. For more on the algebraic boundary we refer the reader to \cite{sinnAlgebraicBoundariesConvex2015}.
\begin{definition}
    Let $K$ be any compact subset in $\R^d$, then its \emph{algebraic boundary} $\partial_a K$ is the $\R$-Zariski closure of the Euclidean boundary $\partial K$.
\end{definition}

Knowing the radial function of a convex body $K$ implies knowing its boundary. In fact, when $0\in \operatorname{int} K$ then $x\in \partial K$ if and only if $\rho_K (x) = 1$ (see \Cref{rmk:discontinuity_radial} for the other cases).
Therefore, using the same notation as in the proof of \Cref{thm:intersection_body_semialgbraic}, we can observe that the algebraic boundary of the intersection body of a polytope is contained in the union of the varieties $\mathcal{V}\left(\| x \|^2 q_i(x) - p_i(x) \right)$.
Indeed, we actually know more: as will be proven in \Cref{prop:degree}, the $p_i$'s are divisible by the polynomial $\|x\|^2$, and hence
\begin{equation*}
    \partial_a IP = \bigcup_{i\in I} \mathcal{V}\left(q_i(x) - \frac{p_i(x)}{\|x\|^2} \right)
\end{equation*}
because of the assumption made in the proof of \Cref{thm:intersection_body_semialgbraic} that $p_i,q_i$ do not have common components.  That is, these are exactly the irreducible components of the boundary of $IP$.

\begin{remark}\label{rmk:discontinuity_radial}
As anticipated in \Cref{section:IP_semialgebraic} there may be difficulties when computing the boundary of $IP$ in the case where the origin is not in the interior of the polytope $P$. In particular, $x$ is a discontinuity point of the radial function of $IP$ if and only if $x^\perp$ contains a facet of $P$. Therefore $\rho_{IP}$ has discontinuity points if and only if the origin lies in the union of the affine linear spans of the facets of $P$. In this case, there are finitely many rays where the radial function is discontinuous and they belong to $\R^d \setminus \left( \cup_{i\in I} C_i \right)$, i.e. to the hyperplane arrangement $H$. If $d=2$, these rays disconnect the space, and this implies that we loose part of the (algebraic) boundary of $IP$: to the set $\{x\in\R^d \mid \rho_{IP}(x) = 1\}$ we need to add segments from the origin to the boundary points in the direction of these rays. However, in higher dimensions the discontinuity rays do not disconnect $\R^d$ so $\{x \in \R^d \mid \rho_{IP}(x) = 1\}$ approaches the region where the radial function is zero continuously except for these finitely many directions.  Therefore there are no extra components of the boundary of $IP$.
\end{remark}

\begin{example}[Continuation of \Cref{ex:cube}, cf. \Cref{fig:int_cube}]\label{ex:cube2}
Starting from the radial function of the intersection body of the $3$-cube $P$, computed using \Cref{alg:rho-for-region}, we can recover the equations of its algebraic boundary. The Euclidean boundary of this convex body is divided in $14$ regions. Among them, $6$ arise as the intersection of a convex cone spanned by $4$ rays with a hyperplane; they constitute facets, i.e. flat faces of dimension $d-1$, of $IP$. For example the facet exposed by the vector $(1,0,0)$ is the intersection of $z = 4$ with the convex cone 
\begin{equation*}
    \overline{C}_1 = \co\{(1, 0, 1), (-1, 0, 1), (0, 1, 1), (0, -1, 1)\}.
\end{equation*} In other words, the variety $\mathcal{V}(z-4)$ is one of the irreducible components of $\partial_a IP$.
The remaining $8$ regions are spanned by $3$ rays each, and the polynomial that defines the boundary of $IP$ is a cubic, such as
\begin{equation*}
    2 x y z - 2 x^2 - 4 x y - 2 y^2 - 4 x z + 4 y z - 2 z^2
\end{equation*}
in the region 
\begin{equation*}
    \overline{C}_2 = \co\{(0, 1, 1), (-1, 1, 0), (-1, 0, 1)\}.
\end{equation*}
These cubics are in fact, up to a change of coordinates, the algebraic boundary of a famous spectrahedron: the elliptope \cite{laurent:elliptope}.
Hence $\partial_a IP$ is the union of $14$ irreducible components, six of degree $1$ and eight of degree $3$.
\end{example}

\begin{remark}
In \cite{plaumann2021families} the authors introduce the notion of \emph{patches} of a semialgebraic convex body, with the purpose of mimicking the faces of a polytope. In the case of intersection bodies of polytopes, it is tempting to think that each region of \Cref{lemma:subdivision_of_sphere} corresponds to a patch. Indeed, this happens, for example, for the centered $3$-cube in \Cref{ex:cube2}. On the other hand, if $P = [-1,1]^3+(0,0,1)$ then there are $4$ regions that define the same patch of the algebraic boundary of $IP$; therefore there is, unfortunately, no one-to-one correspondence between regions and patches.
\end{remark}

\begin{prop}\label{prop:degree}
Using the notation of \Cref{lemma:subdivision_of_sphere} and \Cref{theorem:IP_semialgebraic}, fix a chamber $C$ of $H$ and let $Q = P\cap u^{\perp}$ for some $u\in U = C \cap S^{d-1}$. Then the polynomial $\|x\|^2 = x_1^2+\ldots +x_d^2$ divides $p(x)$ and
\begin{equation*}\label{eq:bound_vertices_Q}
    \deg \left( q(x) - \frac{p(x)}{\|x\|^2} \right) \leq f_0(Q).
\end{equation*}
\end{prop}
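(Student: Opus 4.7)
The plan is to examine the explicit formula
$$\sum_{j\in J} \epsilon_j \det M_j(x) \;=\; \frac{p(x)}{q(x)}$$
from the proof of \Cref{thm:intersection_body_semialgbraic} (up to a harmless constant; here $\epsilon_j\in\{\pm 1\}$ are the constant signs of the $\det M_j$ on the chamber $C$), and to extract the divisibility of $p(x)$ by $\|x\|^2$ from the internal structure of the matrices $M_j(x)$. Write $\mathcal{E}$ for the set of edges of $P$ intersected by $x^\perp$ for $x\in C$, so that $|\mathcal{E}|=f_0(Q)$, and set $L(x)=\prod_{e\in\mathcal{E}}\langle b_e-a_e,x\rangle$ for the common denominator of the $\det M_j$; note $L$ is a product of linear forms of total degree $f_0(Q)$.

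The key step is to show that, for every $j$, the polynomial numerator $D_j(x):=\det M_j(x)\cdot\prod_{i=1}^{d-1}\langle b_i-a_i,x\rangle$ is already divisible by $\|x\|^2$. After clearing denominators, $D_j(x)$ is the determinant of a $d\times d$ matrix whose first $d-1$ rows are the vectors $r_i(x):=\langle b_i,x\rangle a_i-\langle a_i,x\rangle b_i$ and whose last row is $x$; a one-line check gives the polynomial identity $\langle r_i(x),x\rangle=0$. Expanding along the last row writes $D_j(x)=\langle w_j(x),x\rangle$, where $w_j(x)$ is the vector of signed cofactors of that row. The cofactor identity forces $w_j\perp r_i$, and since the $r_i$ span $x^\perp$ generically on $C$ (the simplex $\Delta_j$ being full-dimensional in $x^\perp$), the vector $w_j(x)$ must be parallel to $x$. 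A short UFD argument using $\gcd(x_k,x_\ell)=1$ in $\R[x_1,\ldots,x_d]$ then promotes this to a polynomial identity $w_j(x)=\mu_j(x)\,x$ with $\mu_j\in\R[x_1,\ldots,x_d]$ of degree $d-2$, and consequently $D_j(x)=\mu_j(x)\,\|x\|^2$.

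Summing over $j$ with common denominator $L(x)$ gives
$$\sum_{j\in J} \epsilon_j \det M_j(x) \;=\; \frac{\|x\|^2\,N(x)}{L(x)},$$
with $N(x)$ a polynomial of degree at most $f_0(Q)-1$. Cross-multiplying with the reduced form $p/q$ yields $p(x)L(x)=\|x\|^2 q(x)N(x)$. Because $\|x\|^2$ is irreducible in $\R[x_1,\ldots,x_d]$ for $d\geq 2$ while $L(x)$ is a product of linear forms (so $\|x\|^2\nmid L$), the factor $\|x\|^2$ must appear in $p(x)$. For the degree bound, write $p=\|x\|^2 p'$; since $p/(\|x\|^2 q)=p'/q$ is homogeneous of degree $-1$, we have $\deg q=\deg p'+1$, and since $q\mid L$ we have $\deg q\leq\deg L=f_0(Q)$. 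Therefore $\deg(q-p/\|x\|^2)=\deg(q-p')\leq\max(\deg q,\deg q-1)=\deg q\leq f_0(Q)$.

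The main obstacle I expect is the last piece of the key step: upgrading ``$w_j(x)\parallel x$ at generic $x\in C$'' to the polynomial identity $w_j(x)=\mu_j(x)\,x$ with $\mu_j$ a polynomial. The cleanest route is to observe that the $2\times 2$ minors $(w_j)_k\, x_\ell - (w_j)_\ell\, x_k$ vanish as polynomials, and then use $\gcd(x_k,x_\ell)=1$ to conclude that $x_k\mid (w_j)_k$ for every $k$ with common quotient $\mu_j$. Everything else in the proof is either a direct verification or a routine degree count.
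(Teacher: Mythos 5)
Your argument is correct, and the overall skeleton matches the paper's — clear the linear-form denominators in $\det M_j$ to obtain the polynomial determinant $\det \hat M_j(x)$, show $\|x\|^2 \mid \det\hat M_j$, and finish with a degree count — but the key divisibility step is established by a genuinely different mechanism. The paper observes that $\hat M(x)\cdot x = (0,\ldots,0,\|x\|^2)^{\top}$, so on the (complex) quadric $\|x\|^2 = 0$ the matrix $\hat M(x)$ has $x$ in its kernel, hence $\det\hat M(x)=0$; since $\|x\|^2$ is square-free, the Nullstellensatz yields the divisibility in one stroke. You instead cofactor-expand along the row $x$ to write $\det\hat M_j = \langle w_j, x\rangle$, use the alien cofactor identity to get $w_j\perp r_i$ for all $i$, note that the $r_i$ span $x^{\perp}$ on a dense open subset of $C$ because the $(d-1)$-simplex $\Delta_j$ has $0$ as a vertex, conclude $w_j\parallel x$ there, and promote this to the polynomial identity $w_j = \mu_j\, x$ via the $2\times 2$-minor and $\gcd(x_k,x_\ell)=1$ argument. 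This is a more hands-on, purely polynomial-identity route that avoids the Nullstellensatz; the paper's version is shorter, yours is more elementary. Your subsequent bookkeeping also differs slightly: you deduce $\|x\|^2\mid p$ from irreducibility of $\|x\|^2$ over $\R$ together with $\|x\|^2\nmid L$, then get $q\mid L$ from $\gcd(p,q)=1$, and use homogeneity of degree $-1$ to control the degrees of the reduced numerator and denominator, whereas the paper reads $\deg q\le f_0(Q)$ and $\deg p\le f_0(Q)+1$ directly off explicit product formulas for $p$ and $q$ (not in reduced form). Both give the stated bound.
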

\begin{proof}
For the fixed region $C$, let $T$ be a triangulation of $Q$ with simplices indexed by $J$. Then the volume of $Q$ is given by 
\begin{equation*}
    \frac{p(x)}{q(x)} =  \frac{1}{(d-1)!} \sum_{j\in J} \left| \det \left(M_j\left( x \right)\right)\right|,
\end{equation*}
where $M_j$ is the matrix as in the proof of \Cref{thm:intersection_body_semialgbraic}.  Notice that for each $M = M_j$, we can rewrite the determinant to factor out a denominator (we also write for simplicity $\Delta = \Delta_j$):
\begin{align*} 
    \det(M(x)) &= \sum_{\sigma \in S_d} \sgn(\sigma) \prod_{i=1}^d M_{i\sigma(i)} \\
    &= \sum_{\sigma \in S_d} \sgn(\sigma) x_{\sigma(d)} \prod_{i=1}^{d-1} \frac{\langle b_{i}, u \rangle a_{i\sigma(i)} - \langle a_{i}, u \rangle b_{i\sigma(i)}}{\langle b_{i}-a_{i}, u \rangle} \\
    &= \prod_{i=1}^{d-1} \frac{1}{\langle b_{i}-a_{i}, u \rangle} \sum_{\sigma \in S_d} \sgn(\sigma) x_{\sigma(d)} \prod_{i=1}^{d-1} \left(\langle b_{i}, u \rangle a_{i\sigma(i)} - \langle a_{i}, u \rangle b_{i\sigma(i)}\right)\\
     &= \left( \prod_{\substack{v_i \in \Delta \\ \text{vertex}}} \frac{1}{\langle b_{i}-a_{i}, x \rangle} \right) \cdot \det \left (\hat{M}\left ( x \right )\right )
\end{align*}
where 
$$
\hat{M}(x) = 
    \begin{bmatrix}
    \vdots \\
    \langle b_{i}, x \rangle a_{i} - \langle a_{i}, x \rangle b_{i} \\
    \vdots \\
    x
    \end{bmatrix}
$$
and the determinant of $\hat{M}(x)$ is a polynomial of degree $d$ in the $x_i$'s. 
Note that if we multiply $\hat{M}(x)\cdot x$ we obtain the vector $(0,\ldots ,0,x_1^2+\ldots+x_d^2)$. Hence if $x_1^2+\ldots+x_d^2 = 0$, then $\hat{M}(x)\cdot x = 0$, i.e. the kernel of $\hat{M}(x)$ is non-trivial and thus $\det \hat{M}(x) = 0$. This implies the containment of the complex varieties $\mathcal{V}(\|x\|^2) \subseteq \mathcal{V}(\det \hat{M}(x))$ and therefore the polynomial $x_1^2+\ldots+x_d^2$ divides the polynomial $\det \hat{M}(x)$.
When we sum over all the simplices in the triangulation $T$ we obtain that
\begin{align*}
    q(x) &= (d-1)! \left( \prod_{\substack{v_i \in \Delta \\ \hbox{vertex}}} \frac{1}{\langle b_{i}-a_{i}, x \rangle} \right) \cdot \left( \prod_{\substack{v_i \notin \Delta \\ \hbox{vertex}}} \frac{1}{\langle b_{i}-a_{i}, x \rangle} \right) \\
    &= \prod_{\substack{v_i \in Q \\ \hbox{vertex}}} \frac{1}{\langle b_{i}-a_{i}, x \rangle}
\end{align*}
and
\begin{equation*}
    p(x) = \sum_{j\in J} \left(\left| \det \left(\hat{M}\left( x \right)\right)\right| \cdot \prod_{\substack{v_i \notin \Delta \\ \hbox{vertex}}} \frac{1}{\langle b_{i}-a_{i}, x \rangle} \right).
\end{equation*}
Hence $\deg q \leq f_0(Q)$ and $\deg p \leq f_0(Q) + 1$, so the claim follows.
\end{proof}

Notice that generically, meaning for the generic choice of the vertices of $P$, the bound in \Cref{prop:degree} is attained,
 because $p$ and $q$ will not have common factors.

\begin{theorem}\label{cor:degree}
Let $P\subset \R^d$ be a full-dimensional polytope with $f_1(P)$ edges. Then the degrees of the irreducible components of the algebraic boundary of $IP$ are bounded from above by
\begin{equation*}
    f_1(P) - (d-1).
\end{equation*}
\end{theorem}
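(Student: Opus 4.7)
The plan is to combine \Cref{prop:degree} with a purely combinatorial bound $f_0(Q) \leq f_1(P) - (d-1)$. By \Cref{prop:degree}, any irreducible component of $\partial_a IP$ arising from a chamber $C$ of the hyperplane arrangement in \Cref{lemma:subdivision_of_sphere} is a factor of $q(x) - p(x)/\|x\|^2$, whose total degree is at most $f_0(Q)$ for $Q = P \cap u^\perp$ with $u \in C \cap S^{d-1}$. We may restrict attention to chambers for which $Q$ is a genuine $(d-1)$-dimensional polytope, since the remaining chambers contribute no boundary component.

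I would next exploit the standard correspondence between vertices of $Q$ and edges of $P$ cut transversally by $u^\perp$: each vertex of $Q$ sits on exactly one such edge, and conversely. Hence $f_0(Q)$ equals $f_1(P)$ minus the number of edges of $P$ whose two endpoints lie on the same side of $u^\perp$. The theorem then reduces to showing that at least $d-1$ edges of $P$ are \emph{not} cut by $u^\perp$.

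For this, partition the vertices of $P$ as $V_+ \sqcup V_-$ according to the sign of $\langle u, \cdot \rangle$ (choosing $u \in C$ generically, so that no vertex of $P$ lies on $u^\perp$). Since $Q$ is $(d-1)$-dimensional, the hyperplane $u^\perp$ meets the interior of $P$, and hence both $V_+$ and $V_-$ are nonempty. The key combinatorial input is that the subgraphs of the edge graph of $P$ induced by $V_+$ and by $V_-$ are each connected. This is the classical monotone-path property of polytopes: from any $v \in V_+$, applying an improving-pivot rule to the linear functional $\langle u, \cdot \rangle$ produces a sequence of $P$-edges along which $\langle u, \cdot\rangle$ is non-decreasing, therefore stays entirely inside $V_+$, and terminates at a maximizer of $\langle u, \cdot\rangle$; since the set of maximizers forms a face of $P$, whose edge graph is itself connected, all of $V_+$ sits in one component. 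The argument with $-u$ handles $V_-$.

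Finally, a connected graph on $k$ vertices has at least $k - 1$ edges, so the number of uncut edges is at least $(|V_+|-1) + (|V_-|-1) = f_0(P) - 2 \geq (d+1) - 2 = d - 1$, using that a full-dimensional polytope in $\R^d$ has at least $d+1$ vertices. This yields $f_0(Q) \leq f_1(P) - (d-1)$, and the theorem follows from \Cref{prop:degree}. The main obstacle I expect is a clean justification of the monotone-path connectedness of $V_\pm$; while this is folklore, a careful pivoting argument or a citation to a standard reference on polytope graph connectivity will be needed to keep the proof self-contained.
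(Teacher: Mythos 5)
Your approach is genuinely different from the paper's and, where it applies, is cleaner, but there is a gap: when the origin is a vertex of $P$, the parenthetical ``choosing $u\in C$ generically, so that no vertex of $P$ lies on $u^\perp$'' cannot be satisfied, since $0$ lies on $u^\perp$ for every $u$. In that case the vertices of $P$ split into three sets $V_+$, $V_-$, and $V_0=\{0\}$, and edges incident to the origin are neither cut (endpoints strictly on opposite sides) nor uncut in your sense (endpoints strictly on the same side), so the identity $f_0(Q)=f_1(P)-|\text{uncut edges}|$ fails. The paper dispatches this case in a sentence: all edges of $P$ incident to the origin meet $u^\perp$ only at $0$ and so collapse to the single vertex $0$ of $Q$; there are at least $d$ such edges by full-dimensionality, giving $f_0(Q)\le f_1(P)-(d-1)$ directly. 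Your framework can be patched: with $V_0=\{0\}$ one has $f_0(Q)=1+e_{+-}$ where $e_{+-}$ counts strictly crossing edges, the induced subgraphs on $V_+\cup V_0$ and $V_-\cup V_0$ are each connected by the same monotone-path argument, and a spanning-tree count gives $e_+ + e_- + e_0 \ge |V_+|+|V_-| = f_0(P)-1 \ge d$, which is exactly what is needed. But a case distinction is required and is missing as written.

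Apart from this, your route is distinct from the paper's and worth comparing. The paper handles the origin-not-a-vertex case by splitting into $|V_+|=1$ (or $|V_-|=1$) versus $|V_+|,|V_-|\ge 2$, treats $d=3$ separately, and for $d>3$ produces $d-1$ uncut edges by finding a ridge of $P$ disjoint from $u^\perp$. Your argument replaces all of this with the single classical fact that the subgraphs of the edge graph induced on $V_+$ and $V_-$ are connected, together with the observation that a connected graph on $k$ vertices has at least $k-1$ edges. This is more conceptual and in fact yields the sharper estimate $f_0(Q)\le f_1(P)-(f_0(P)-2)$, which implies the stated bound since $f_0(P)\ge d+1$. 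The connectivity claim you use deserves a precise citation or a short self-contained proof (the argument that at every non-maximal vertex of $P$ some incident edge strictly increases $\langle u,\cdot\rangle$ is the essential content); this is standard and unproblematic, but should be spelled out or referenced to keep the proof complete.
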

\begin{proof}
We want to prove that $f_0(Q) \leq f_1(P) - (d-1)$, for every $Q = P \cap u^\perp$, $u \in S^{d-1} \setminus H$. 
By definition, every vertex of $Q$ is a point lying on an edge of $P$, so trivially $f_0(Q) \leq f_1(P)$. We want to argue now that it is impossible to intersect more than $f_1(P) - (d-1)$ edges of $P$ with our hyperplane $\mathcal{H}= u^\perp$. If the origin is one of the vertices of $P$, then all the edges that have the origin as a vertex give rise only to one vertex of $Q$: the origin itself. There are at least $d$ such edges, because $P$ is full-dimensional, and so $f_0(Q) \leq f_1(P) - (d-1)$. \\

Suppose now that the origin is not a vertex of $P$, then $\mathcal H$ does not contain vertices of $P$. It divides $\R^d$ in two half spaces $\mathcal H_+$ and $\mathcal H_-$, and so it divides the vertices of $P$ in two families of $k$ vertices in $\mathcal H_+$ and $\ell$ vertices in $\mathcal H_-$. Either $k$ or $\ell$ are equal to $1$, or they are both greater than one. In the first case let us assume without loss of generality that $k=1$, i.e. there is only one vertex $v_+$ in $\mathcal H_+$. Then pick one vector $v_-$ in $\mathcal H_-$: because $P$ is a full-dimensional polytope, there are at least $d$ edges of $P$ with $v_-$ as a vertex. Only one of them may connect $v_-$ to $v_+$ and therefore the other $d-1$ edges must lie in $\mathcal H_-$. This gives $f_0(Q) \leq f_1(P) - (d-1)$. \\

On the other hand, let us assume that $k,\ell \geq 2$. 
Then there is at least one edge in $\mathcal H_+$ and one edge in $\mathcal H_-$. If $d=3$ these are the $d-1$ edges that do not intersect the hyperplane. For $d>3$ we reason as follows. Suppose that $\mathcal H$ intersects a facet $F$ of $P$. Then it cannot intersect all the facets of $F$ (i.e. a ridge of $P$), otherwise we would get $F\subset \mathcal H$ which contradicts the fact that $\mathcal H$ does not intersect vertices of $P$. So there exists a ridge $F'$ of $P$ that does not intersect the hyperplane; it has dimension $d-2\geq 2$ and therefore it has at least $d-1$ edges. Therefore $$f_0(Q) \leq f_1(P) - (d-1).$$
\end{proof}

\begin{cor}\label{cor:degree_cc}
In the hypotheses of \Cref{cor:degree}, if $P$ is centrally symmetric and centered at the origin, then we can improve the bound with 
\begin{equation*}
    \frac{1}{2} \left( f_1(P) - (d-1) \right).
\end{equation*}
\end{cor}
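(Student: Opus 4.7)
The plan is to revisit the derivation of the rational expression $\vol(Q) = p(x)/q(x)$ in \Cref{prop:degree} and exploit central symmetry to exhibit a common factor of degree $f_0(Q)/2$ between $p$ and $q$. After reducing, the degree of the irreducible boundary component drops from at most $f_0(Q)$ to at most $f_0(Q)/2$; combining with the inequality $f_0(Q) \leq f_1(P) - (d-1)$ from the proof of \Cref{cor:degree} then yields the claimed estimate.

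Since $P = -P$ and $u^\perp = -u^\perp$, the section $Q = P \cap u^\perp$ is centrally symmetric. Because $P$ is full-dimensional and centered at the origin, $0 \in \operatorname{int}(P)$, hence $0 \in \operatorname{int}(Q)$ and no edge of $P$ passes through the origin. So the vertices of $Q$ come in antipodal pairs $\pm v_1, \ldots, \pm v_m$ (with $f_0(Q) = 2m$), each pair $\pm v_k$ arising from an antipodal pair of edges $[a_k, b_k], [-a_k, -b_k]$ of $P$. The linear form associated to such a vertex in the proof of \Cref{prop:degree} is $\langle b_k - a_k, x\rangle$ for $v_k$ and $\langle -b_k - (-a_k), x\rangle = -\langle b_k - a_k, x\rangle$ for $-v_k$; they differ only by a sign. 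Therefore
\begin{equation*}
    q(x) \;=\; \prod_{v_i \text{ vertex of } Q} \langle b_i - a_i, x \rangle \;=\; (-1)^m \tilde q(x)^2,
    \qquad \tilde q(x) := \prod_{k=1}^m \langle b_k - a_k, x\rangle.
\end{equation*}

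Because $v_k$ and $-v_k$ are collinear with the origin, no simplex $\Delta_j$ in the triangulation of $Q$ with the origin as a common vertex can contain both of them; so the $d-1$ non-origin vertices of $\Delta_j$ correspond to distinct antipodal pairs, indexed by some $I_j \subseteq \{1,\ldots,m\}$ with $|I_j| = d-1$. Repeating the computation of \Cref{prop:degree} with $\tilde q$ in place of $q$ as the common denominator yields $\vol(Q) = \tilde p(x)/\tilde q(x)$, where
\begin{equation*}
    \tilde p(x) \;=\; \frac{1}{d!}\sum_j \eta_j\, \det \hat M_j(x) \prod_{k \notin I_j} \langle b_k - a_k, x \rangle,
\end{equation*}
and each $\eta_j \in \{\pm 1\}$ absorbs both the orientation of $\Delta_j$ and the sign contributions from the choices $v_k$ versus $-v_k$. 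Each summand has degree at most $d + (m - (d-1)) = m+1$, so $\deg \tilde p \leq m+1$; and by the same argument as in \Cref{prop:degree}, $\|x\|^2$ divides every $\det \hat M_j(x)$ and therefore $\|x\|^2 \mid \tilde p(x)$. Hence the irreducible boundary component $\tilde q(x) - \tilde p(x)/\|x\|^2$ of $\partial_a IP$ associated to the chamber $C$ has degree at most $m = f_0(Q)/2$. Invoking $f_0(Q) \leq f_1(P) - (d-1)$ from the proof of \Cref{cor:degree} completes the argument.

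The main obstacle is the bookkeeping of signs: one must verify that the absolute values $|\det M_j|$ appearing in the volume formula, together with the sign flips coming from the denominator factors $-\langle b_k - a_k, x\rangle$ at each $-v_k$ vertex, combine into a single sign $\eta_j$ per simplex that is constant on $C$. Once this is done, the reduction of $p/q$ to $\tilde p/\tilde q$ is an identity of polynomials, not merely of rational functions on $C$, and the degree count goes through.
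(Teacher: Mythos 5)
Your argument is correct and follows essentially the same route as the paper: both proofs exploit that antipodal vertices $\pm v_k$ of $Q$ produce linear forms $\pm\langle b_k - a_k, x\rangle$ that differ only by sign, so the reduced common denominator over the chamber picks up only one factor per antipodal pair and has degree at most $f_0(Q)/2$. The paper phrases this as the least common multiple of the per-simplex denominators collapsing, while you make the cancellation explicit via $q = (-1)^m\tilde q^2$ and a careful rewrite of the numerator; your added observation that no simplex of the cone triangulation through the origin can contain both $v_k$ and $-v_k$ is a nice and correct sanity check, though not strictly needed for the LCM formulation.
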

\begin{proof}
We already know that for each chamber $C_i$ from \Cref{lemma:subdivision_of_sphere}, 
the degree of the corresponding irreducible component is bounded by the degree of the polynomial $q_i$. 
This follows from the construction of $p_i$ and $q_i$ in the proof of \Cref{thm:intersection_body_semialgbraic}.  Specifically, the determinant which gives $p_i/q_i$ comes with the product of $d-1$ rational functions, with linear numerator and denominators, and one linear term.  Thus $\deg p_i = \deg q_i + 1$ which implies that $\deg \frac{p_i}{||x||^2} < \deg q_i$.
By definition these polynomials are obtained as the least common multiple of objects with shape
\begin{equation*}
    \prod_{\substack{v_k \in \Delta_j \\ \hbox{vertex}}} \frac{1}{\langle b_{k}-a_{k}, x \rangle}.
\end{equation*}
If $P$ is centrally symmetric, so is $Q$, and therefore we have the vertex belonging to the edge $[a_k,b_k]$ and also the vertex belonging to the edge $[-a_k,-b_k]$. When computing the least common multiple, these two vertices produce the same factor, up to a sign, and therefore they count as the same linear factor of $q_i$. Hence for every $i$
\begin{equation*}
    \deg q_i(x) \leq \frac{f_0(Q)}{2} \leq   
    \frac{1}{2} \left( f_1(P) - (d-1) \right).
\end{equation*}
\end{proof}

\begin{example}\label{ex:tetra}
Let $P$ be the tetrahedron in $\R^3$ with vertices $ (-1, -1, -1), (-1, 1, 1), (1, -1, 1)$, $(1, 1, -1)$.
The associated hyperplane arrangement coincides with the one associated to the cube in \Cref{ex:cube2}, so it has $14$ chambers that come in two families. 
\begin{figure}[!ht]
    \centering
    \begin{subfigure}{0.32\textwidth}
    \centering
    \includegraphics[width=0.75\textwidth]{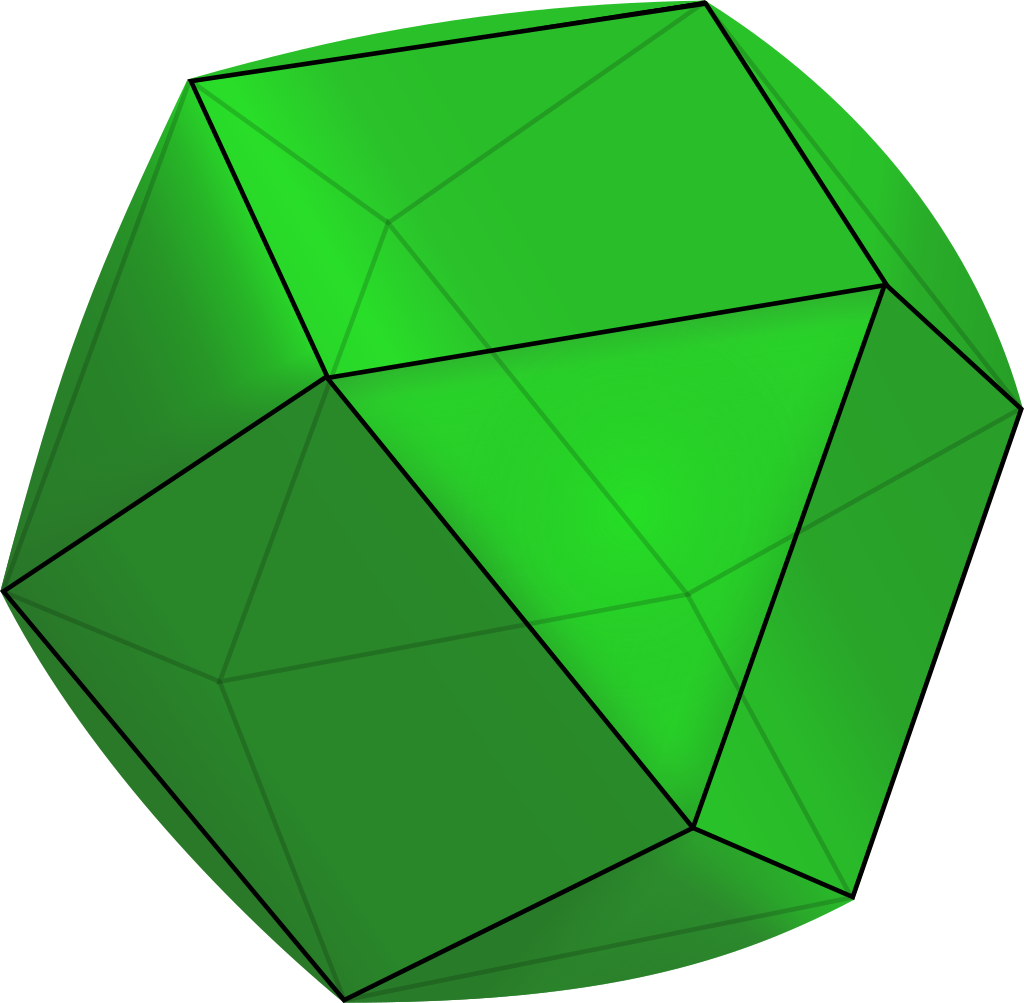}
    \caption{}
    \label{fig:int_cube}
    \end{subfigure}
    \begin{subfigure}{0.32\textwidth}
    \centering
    \includegraphics[width=0.75\textwidth]{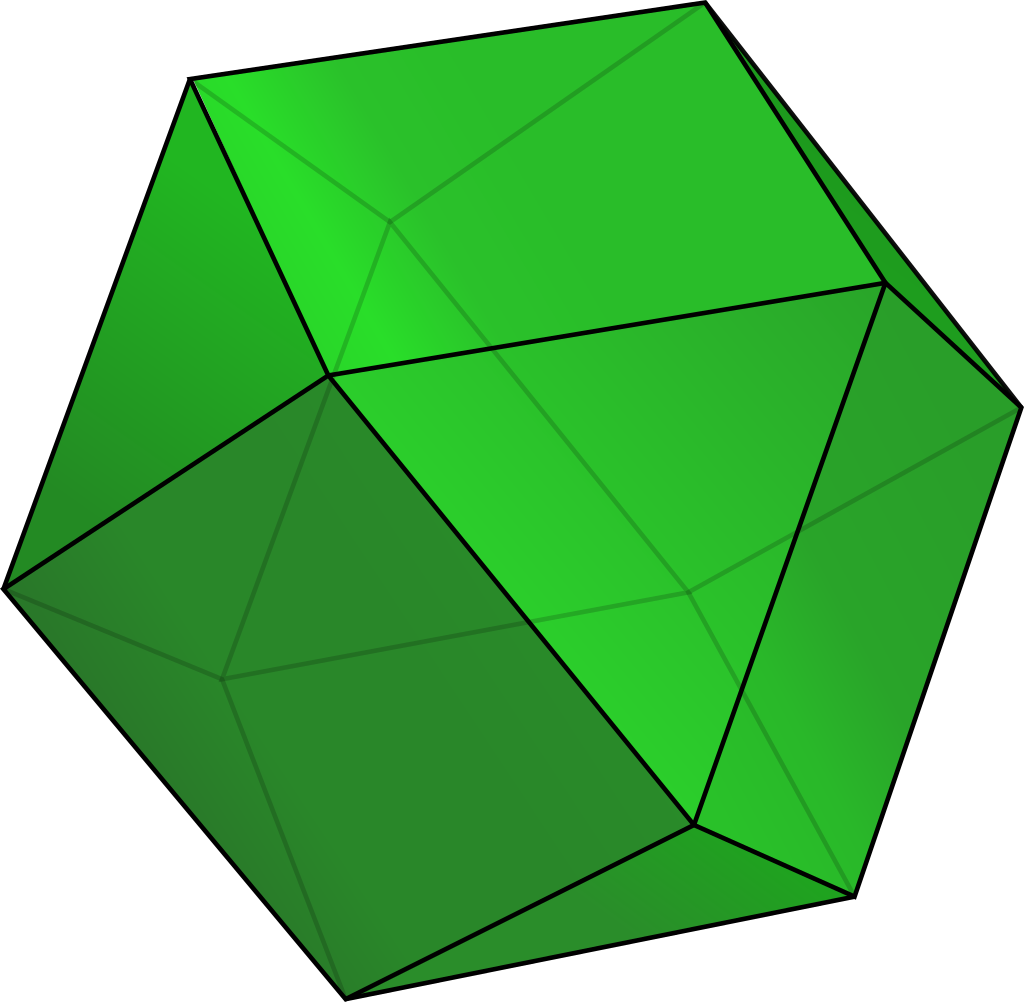}
    \caption{}
    \label{fig:zonotope}
    \end{subfigure}
    \begin{subfigure}{0.32\textwidth}
    \centering
    \includegraphics[width=0.75\textwidth]{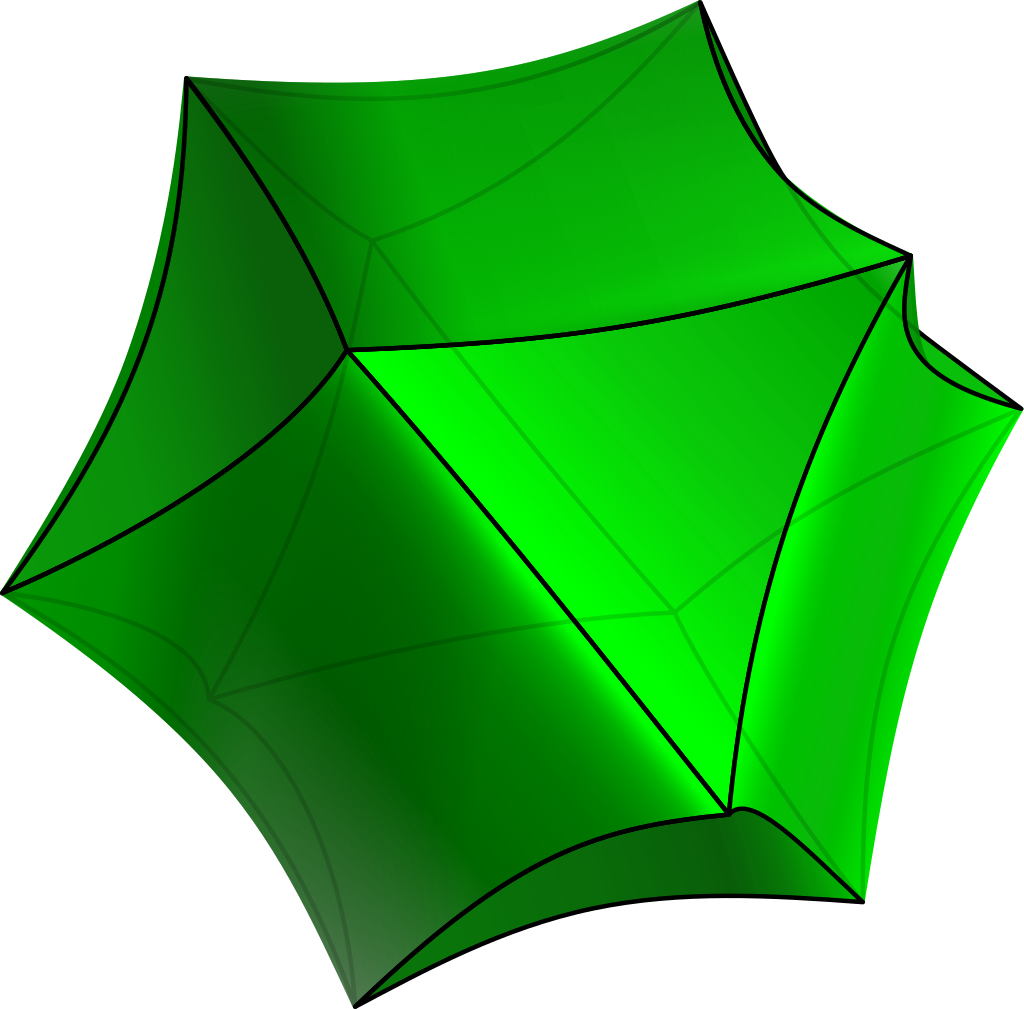}
    \caption{}
    \label{fig:int_tetra}
    \end{subfigure}
    \caption{Left: the intersection body of the cube in \Cref{ex:cube2}. Right: the intersection body of the tetrahedron in \Cref{ex:tetra}. Center: the dual body of the zonotope $Z(P)$ associated to both the cube and the tetrahedron. Such a polytope reveals the structure of the boundary divided into regions of these two intersection bodies.}
    \label{fig:tetra_zonot}
\end{figure}
The first one consists of cones spanned by four rays, such as $\overline{C}_1$ (see \Cref{ex:cube2}). The polynomial that defines the boundary of $IP$ in this region is a quartic, namely
\begin{equation*}
    q_2(x,y,z) - \frac{p_2(x,y,z)}{\|(x,y,z)\|^2} = (x + z) (x - z) (y + z) (y - z) - 2 ( x^2 + y^2 - z^2) z.
\end{equation*}
On the other hand the cones of the second family are spanned by three rays: here the section of $P$ is a triangle and the equation of the boundary if $IP$ is a cubic. An example is the cone $\overline{C}_2$ with the polynomial
\begin{equation*}
    q_1(x,y,z) - \frac{p_1(x,y,z)}{\|(x,y,z)\|^2} =
    (x - y) (x - z) (y + z) + ( x - y - z)^2.
\end{equation*}
Note that this region furnishes an example in which the bounds given in \Cref{prop:degree} and \Cref{cor:degree} are attained.
\end{example}

\begin{remark} \label{rem:zonotope_combinatorics}
\Cref{rem:zonotope} together with \Cref{prop:degree} implies that the structure of the irreducible components of the algebraic boundary of $IP$ is strongly connected with the face lattice of the dual of the zonotope $Z(P)$. More precisely, in the generic case, the lattice of intersection of the irreducible components is isomorphic to the face lattice of the dual polytope $Z(P)^\circ$.
Thus, a classification of ``combinatorial types'' of such intersection bodies is analogous to the classification of zonotopes / hyperplane arrangements / oriented matroids. It is however worth noting, that the same zonotope can be associated to two polytopes $P_1$ and $P_2$ which are not combinatorially equivalent.
One example of this instance is a pair of polytopes such that $P_1 = \conv(v_1, \dots, v_n)$ and $P_2 = \conv(\pm v_1, \dots, \pm v_2)$, as can be seen in \Cref{fig:tetra_zonot} for the cube and the tetrahedron. To have a better overview over the structure of the boundary of $IP$, one strategy is to use the Schlegel diagram of $Z(P)^\circ$. We label each maximal cell by the degree of the polynomial that defines the corresponding irreducible component of $\partial IP$, as can be seen in \Cref{fig:schlegel_ico,fig:schlegel_diagrams}.
\end{remark}

\begin{example}[Continuation of \Cref{ex:icosahedron1}, cf. \Cref{fig:int_icosahedron}]\label{ex:icosahedron2}
Let $P$ be the regular icosahedron. In the $12$ regions which are spanned by five rays, the polynomial that defines the boundary of $IP$ has degree $5$ and it looks like
\begin{gather*}
    ( (\sqrt{5} x + \sqrt{5} y - x + y)^2 - 4z^2) ( (\sqrt{5} x + x + 2 y)^2 - (\sqrt{5} z - z)^2) y +\\
    8 \sqrt{5} x^3 y + 68 \sqrt{5} x^2 y^2 + 72 \sqrt{5} x y^3 + 20 \sqrt{5} y^4 - 40 \sqrt{5} x y z^2 - 20 \sqrt{5} y^2 z^2 + 4 \sqrt{5} z^4 +\\
    8 x^3 y + 164 x^2 y^2 + 168 x y^3 + 44 y^4 - 8 x^2 z^2 - 72 x y z^2 - 44 y^2 z^2 + 12 z^4.
\end{gather*}
In the other $20$ regions spanned by three rays, $\partial IP$ is the zero set of a sextic polynomial with the following shape
\begin{gather*}
    ( (\sqrt{5}  x + x + 2  y )^2 - ( \sqrt{5}  z - z)^2 ) ( (\sqrt{5}  y - 2  x -  y )^2 - ( \sqrt{5}  z - z)^2 ) x y +
    20  \sqrt{5}  x^4  y - \\
    20  \sqrt{5}  x^2  y^3 - 4  \sqrt{5}  x  y^4 + 4  \sqrt{5}  y^5 - 4  \sqrt{5}  x^3  z^2 - 60  \sqrt{5}  x^2  y  z^2 - 12  \sqrt{5}  x  y^2  z^2 + 12  \sqrt{5}  x  z^4 +  44  x^4  y - \\
    8  x^3  y^2 - 44  x^2  y^3 + 12  x  y^4 + 12  y^5 - 12  x^3  z^2 - 156  x^2  y  z^2 - 60  x  y^2  z^2 - 8  y^3  z^2 + 28  x  z^4.
\end{gather*}
\begin{figure}
    \centering
    \includegraphics[width=0.37\textwidth]{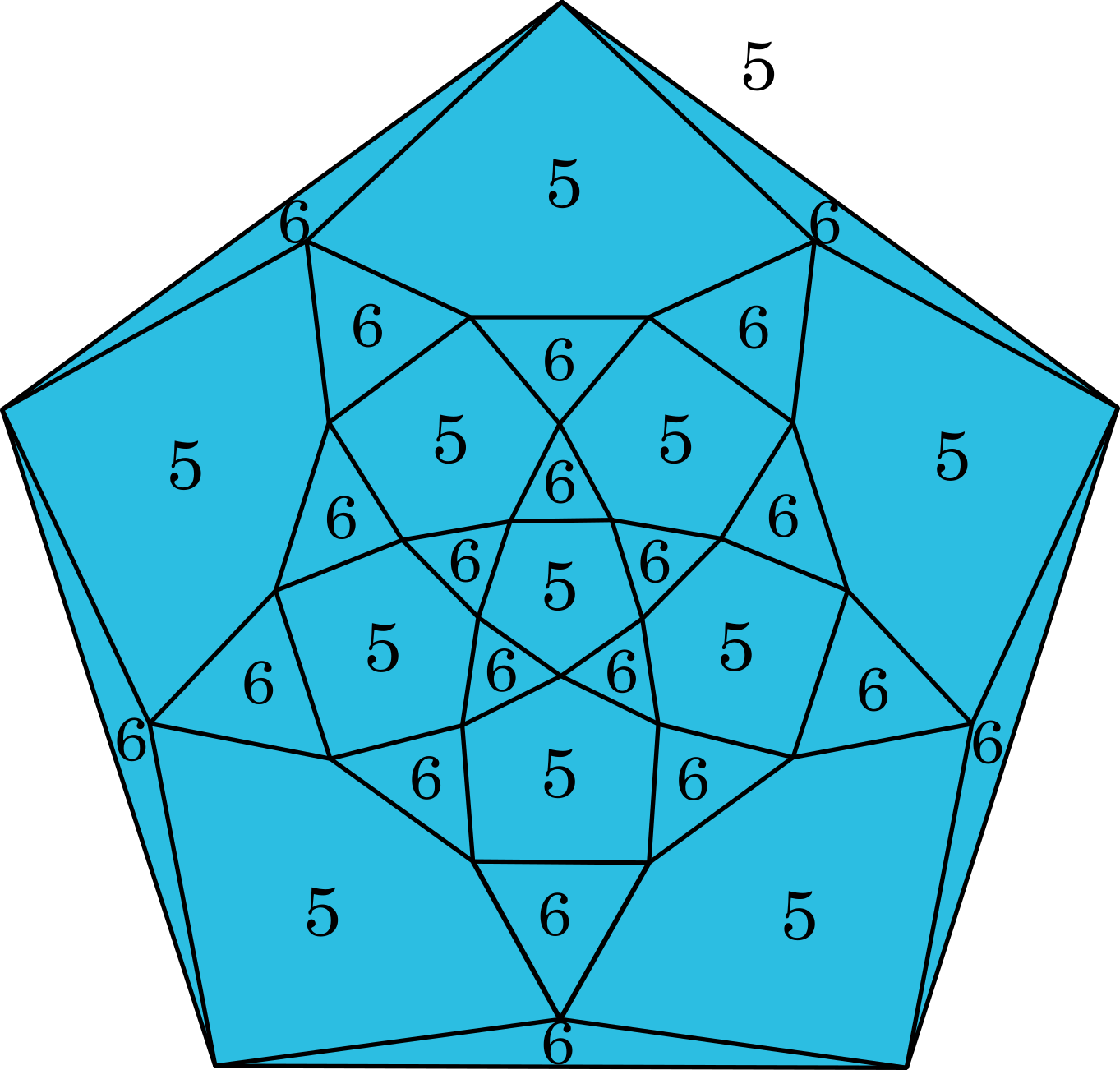}
    \caption{The Schlegel diagram of $Z(P)^\circ$, in the case where $P$ is the icosahedron from \Cref{ex:icosahedron2}. The labels represent the degrees of the polynomials of $\partial_a IP$.}
    \label{fig:schlegel_ico}
\end{figure}
We visualize the structure of these pieces using the Schlegel diagram in \Cref{fig:schlegel_ico}, where the numbers correspond to the degree of the polynomials, as explained in \Cref{rem:zonotope_combinatorics}.
\end{example}

Using this technique we are then able to visualize the boundary of intersection bodies of $4$-dimensional polytopes via the Schlegel diagram of $Z(P)^\circ$.

\begin{example}\label{ex:schlegel}
Let $P = \conv\{(1,1,0,0), (0,1,0,0), (0,-1,0,0), (0,0,-1,0), (0,0,0,-1)\}$. The boundary of its intersection body $IP$ is subdivided in $16$ regions. In four of them the equation is given by a polynomial of degree $3$, whereas in the remaining twelve regions the polynomial has degree $5$. 
In \Cref{fig:schlegel_diagrams} we show the Schlegel diagram of
\begin{equation*}
    Z(P)^\circ = \conv \{\pm (1/2, -1/2, 0, 0), \pm (1, 0, 0, 0), \pm (0, 0, 1, 0), \pm (0, 0, 0, 1)\}
\end{equation*}
with a number associated to each maximal cell which represents the degree of the polynomial in the corresponding region of $\partial IP$.
\end{example}

\begin{figure}[!ht]
    \centering
    \begin{subfigure}{0.4\textwidth}
    \centering
    \includegraphics[width=0.58\textwidth]{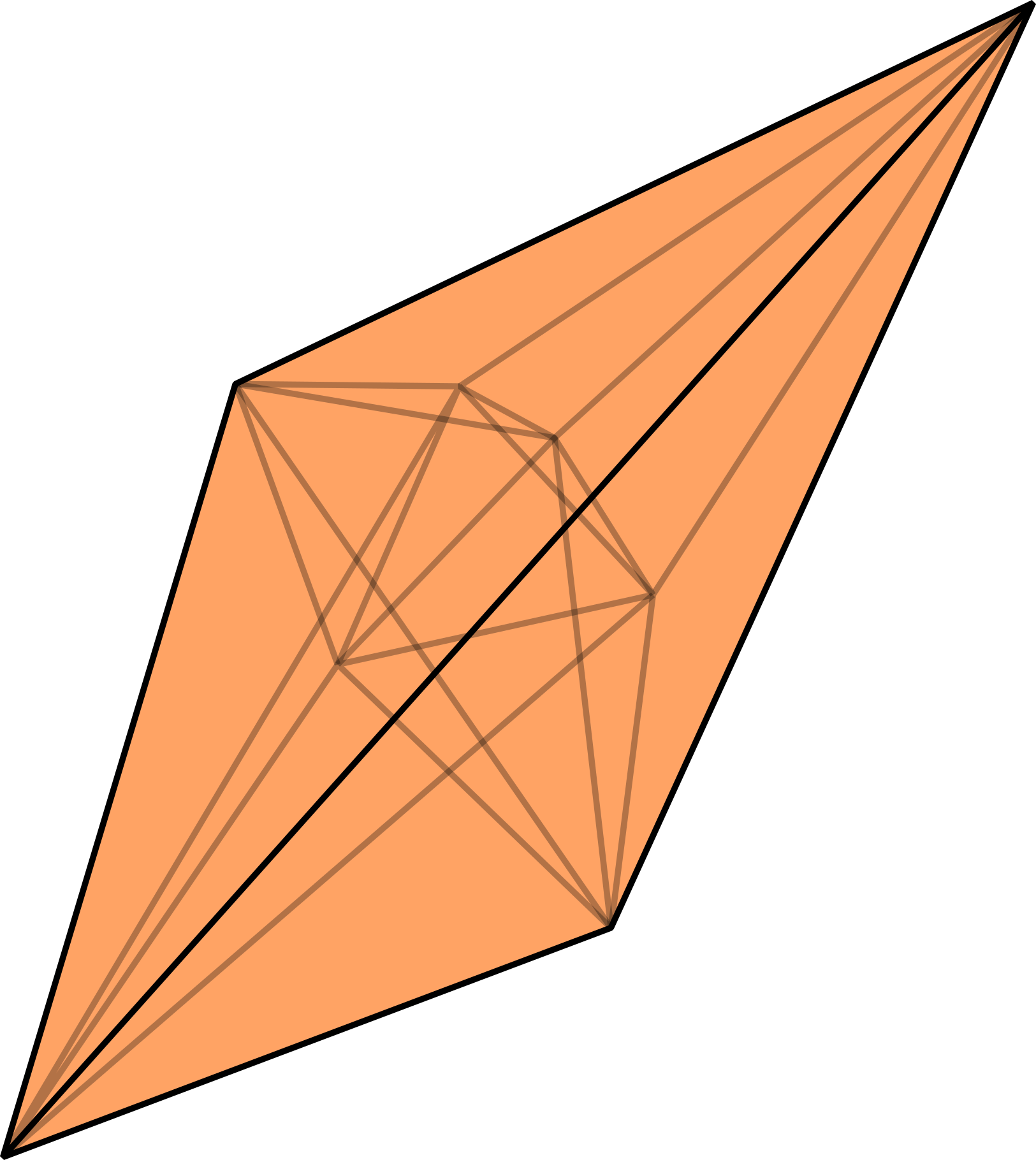}
    \caption{}
    \label{fig:schlegel}
    \end{subfigure}
    \begin{subfigure}{0.58\textwidth}
    \centering
    \includegraphics[width=0.56\textwidth]{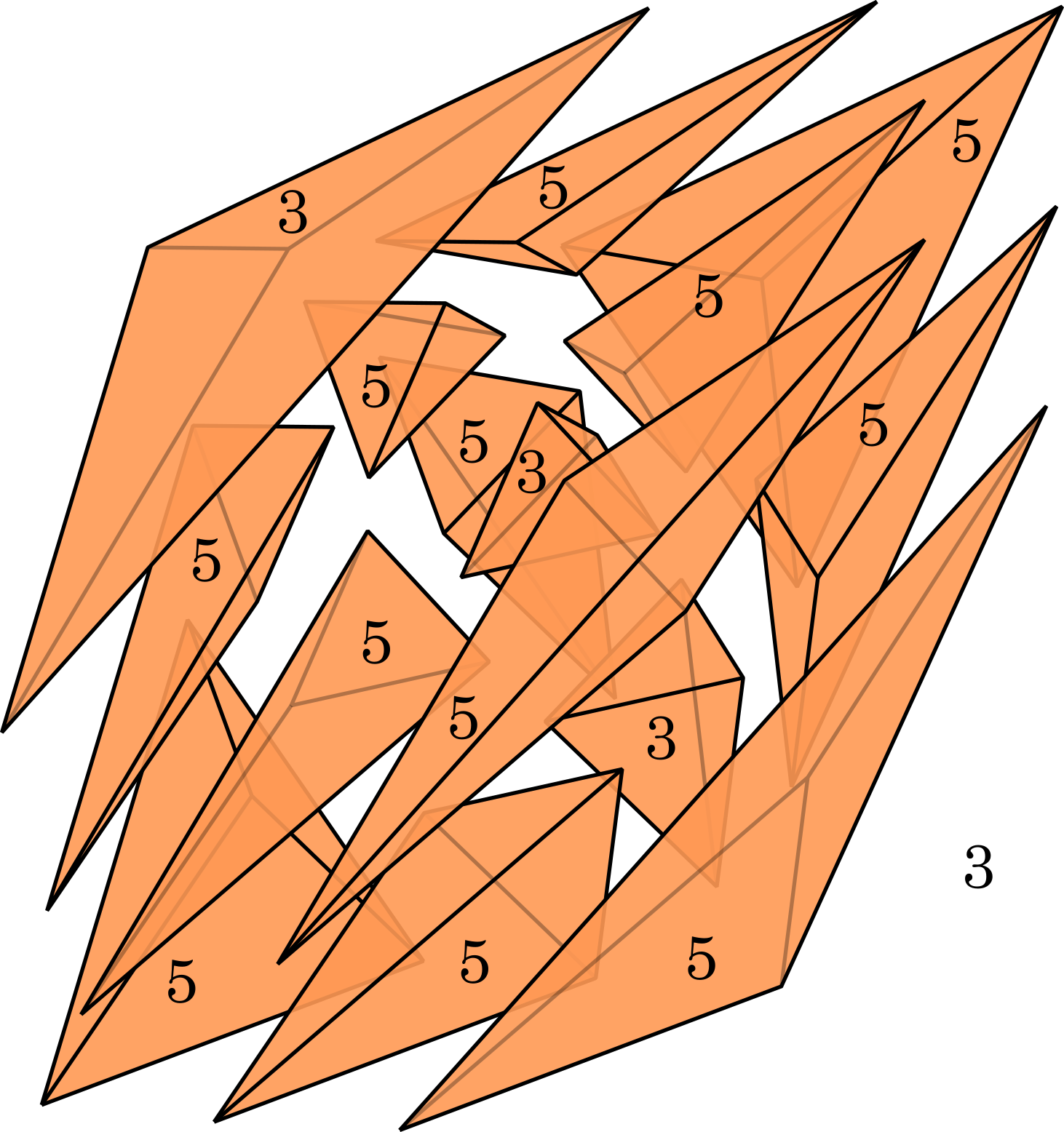}
    \caption{}
    \label{fig:schlegel_explode}
    \end{subfigure}
    \caption{The Schlegel diagram of $Z(P)^\circ$ from \Cref{ex:schlegel}. There are four cells whose corresponding polynomial in $\partial IP$ has degree $3$, including the outer facet; the others correspond to degree $5$ polynomials.}
    \label{fig:schlegel_diagrams}
\end{figure}

\section{The Cube}
\label{section:cube}
In this section we investigate the intersection body of the $d$-dimensional cube $\cube{d} = [-1,1]^d$, with a special emphasis on the linear components of its algebraic boundary.

\begin{prop}\label{prop:cube-linear-components}
The algebraic boundary of the intersection body of the $d$-dimensional cube $\cube{d}$ has at least $2d$ linear components.  These components correspond to the $2d$ open regions from \Cref{lemma:subdivision_of_sphere} which contain the standard basis vectors and their negatives.  
\end{prop}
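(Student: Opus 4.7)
The plan is to compute the radial function of $IC^{(d)}$ explicitly on each of the $2d$ regions singled out by the statement and observe that the defining equation of the boundary is linear in each of them. Fix an index $i\in\{1,\ldots,d\}$ and a sign $\epsilon\in\{-1,+1\}$, and let $C_{i,\epsilon}$ denote the open chamber of the hyperplane arrangement $H$ from \Cref{lemma:subdivision_of_sphere} that contains $\epsilon e_i$. Since the vertices of $C^{(d)}$ are the sign vectors $v\in\{-1,+1\}^d$, the chamber $C_{i,\epsilon}$ is cut out by the inequalities $\epsilon \langle v, x\rangle > 0$ for all $v\in\{-1,+1\}^d$ with $v_i=\epsilon$, which simplifies to $\epsilon x_i > \sum_{j\neq i}|x_j|$. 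In particular, for $u$ in this region, $u^\perp$ meets exactly the $2^{d-1}$ edges of $C^{(d)}$ parallel to the $i$-th axis, so the slice $Q=C^{(d)}\cap u^\perp$ is combinatorially a $(d-1)$-cube.

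Next I would compute $\vol_{d-1}(Q)$. Parametrizing $Q$ by the $d-1$ coordinates $(y_j)_{j\neq i}\in[-1,1]^{d-1}$ and using the hyperplane equation $y_i=-\sum_{j\neq i}u_j y_j/u_i$ to recover the missing coordinate (well-defined since $u_i\neq 0$ and the region condition $|u_i|>\sum_{j\neq i}|u_j|$ guarantees $|y_i|\leq 1$), the orthogonal projection $\pi\colon Q\to\{x_i=0\}$ is a bijection onto $[-1,1]^{d-1}$. Since the normal to $Q$ is $u\in S^{d-1}$ and the normal to the projection hyperplane is $e_i$, the standard area formula gives
\begin{equation*}
\rho_{IC^{(d)}}(u)=\vol_{d-1}(Q)=\frac{\vol_{d-1}(\pi(Q))}{|\langle u,e_i\rangle|}=\frac{2^{d-1}}{|u_i|}
\end{equation*}
for every $u\in C_{i,\epsilon}\cap S^{d-1}$. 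Extending to $x\in C_{i,\epsilon}$ by homogeneity of degree $-1$ yields $\rho_{IC^{(d)}}(x)=2^{d-1}/|x_i|$, and since $\epsilon x_i>0$ throughout $C_{i,\epsilon}$ this equals $\epsilon\cdot 2^{d-1}/x_i$.

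Consequently, on the region $C_{i,\epsilon}$ the Euclidean boundary $\partial IC^{(d)}$ is the affine piece $\{x\in\overline{C_{i,\epsilon}}:x_i=\epsilon\,2^{d-1}\}$, whose $\mathbb{R}$-Zariski closure is the hyperplane $\mathcal{V}(x_i-\epsilon\,2^{d-1})$. The main thing to check is that this piece is genuinely $(d-1)$-dimensional so that the Zariski closure really is the full hyperplane, but this is clear because the projection $\pi$ restricted to the relative interior of this piece is already a homeomorphism onto the open set $\{y\in\mathbb{R}^{d-1}:\sum_{j\neq i}|y_j|<2^{d-1}\}$. The $2d$ hyperplanes $\{x_i=\epsilon\,2^{d-1}\}$ obtained as $(i,\epsilon)$ ranges over $\{1,\ldots,d\}\times\{-1,+1\}$ are pairwise distinct, so they contribute $2d$ distinct linear irreducible components to $\partial_a IC^{(d)}$.

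The only mildly delicate point is verifying that the combinatorial region $C_{i,\epsilon}$ really is a single chamber of $H$ (i.e.\ that no hyperplane of $H$ slices through it) and that the slice remains contained in the cube throughout this chamber; both follow directly from the explicit inequality $\epsilon x_i>\sum_{j\neq i}|x_j|$, which is exactly the condition that keeps the parametrizing formula for $y_i$ inside $[-1,1]$ for all $(y_j)_{j\neq i}\in[-1,1]^{d-1}$. With this identification in hand, the rest of the argument is the volume computation above.
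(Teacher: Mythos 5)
Your proof is correct, but it proceeds by a different and arguably more elementary route than the paper's. The paper reuses its determinant formula from the proof of Theorem~\ref{thm:intersection_body_semialgbraic}: it writes the slice volume as the determinant of a $d\times d$ matrix of vertex differences with $u$ as the last row, cofactor-expands along that row, and then invokes Proposition~\ref{prop:degree} (that $\|u\|^2$ divides the numerator polynomial) to recognize that the quadratic tail $\gamma(u_2,\dots,u_d)$ must combine with the $u_1^2$ term to give $\|u\|^2\det A$; restricting to the sphere then yields $\rho(u)=\det A / u_1$. Your argument skips the determinant machinery entirely: you parametrize the slice by dropping the $i$-th coordinate, observe that the orthogonal projection onto $e_i^\perp$ is a bijection onto the $(d-1)$-cube $[-1,1]^{d-1}$, and use the standard fact that orthogonal projection between hyperplanes scales $(d-1)$-volumes by the cosine of the dihedral angle, which here is $|\langle u,e_i\rangle|=|u_i|$. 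This gives $\rho(u)=2^{d-1}/|u_i|$ directly, with the explicit constant $2^{d-1}$ rather than the unevaluated $|\det A|$ of the paper, and without appealing to Proposition~\ref{prop:degree}. Both arguments are valid and the same length in spirit; yours is more self-contained and geometric, while the paper's has the virtue of staying inside the determinantal framework developed for the general semialgebraicity theorem, which makes it more likely to generalize to non-cube polytopes. One minor presentational point: your description of the chamber via $\epsilon\langle v,x\rangle>0$ for $v$ with $v_i=\epsilon$ has the $\epsilon$ in slightly the wrong place (the correct condition is $\langle v,x\rangle>0$ for all representatives $v$ of hyperplanes chosen so that $v_i=\epsilon$), but the simplified inequality $\epsilon x_i>\sum_{j\neq i}|x_j|$ that you actually use is right, so the argument is unaffected.
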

\begin{proof}
We show the claim for the first standard basis vector $e_1$. The argument for the other vectors $\pm e_i, i = 1,\ldots , d$ is analogous.\\

Let $C$ be the region from \Cref{lemma:subdivision_of_sphere} which contains $e_1$ and consider $U = C\cap S^{d-1}$.  For any $u \in U$, the polytope $\cube{d} \cap u^\perp$ is combinatorially equivalent to $\cube{d-1}$.  Hence we can compute the (signed) volume, 
$$\vol(\cube{d} \cap u^\perp) = \det \begin{bmatrix}
v^{(1)} - v^{(0)} \\
\vdots \\
v^{(d-1)} - v^{(0)}\\
u
\end{bmatrix}$$
where $v^{(0)}$ is an arbitrarily chosen vertex of $\cube{d} \cap u^\perp$ and the remaining $v^{(i)}$ are vertices of $\cube{d} \cap u^\perp$ adjacent to $v^{(0)}$.
Next, we observe that for any vertex $v$ of $\cube{d} \cap u^\perp$ which lies on the edge $[a,b]$ of $\cube{d}$, $v$ is the vector
$$ v =  \left(-\frac{1}{u_1}\sum_{j=2}^d a_ju_j,\, a_2,\, \ldots,\, a_d \right).$$
This follows from the formulation of $v$ in the proof of \Cref{thm:intersection_body_semialgbraic} and the fact that $b_1 = -a_1$ and $b_i = a_i$ for $i=2,\ldots,d$.  Combining this with the determinant above gives us the following expression for the radial function restricted to $U$:
\begin{align*}
    \rho(u) & = \frac{1}{u_1}\det \begin{bmatrix}
-\sum_{j=2}^d (a^{(1)}_j-a^{(0)}_j)u_j & a^{(1)}_2-a^{(0)}_2 & \cdots & a^{(1)}_d-a^{(0)}_d \\
-\sum_{j=2}^d (a^{(2)}_j-a^{(0)}_j)u_j & a^{(2)}_2-a^{(0)}_2 & \cdots & a^{(2)}_d-a^{(0)}_d \\
\vdots & \vdots & & \vdots\\
-\sum_{j=2}^d (a^{(d)}_j-a^{(0)}_j)u_j & a^{(d)}_2-a^{(0)}_2 & \cdots & a^{(d)}_d-a^{(0)}_d \\
u_1^2 & u_2 & \cdots & u_d
\end{bmatrix}
\end{align*}
where we assume the determinant is nonnegative, else we will multiply by $-1$.
Expanding the determinant along the bottom row of the matrix yields
\begin{align*}
    \rho(u) &= \frac{1}{u_1}\left( u_1^2\det \begin{bmatrix}
 a^{(1)}_2-a^{(0)}_2 & \ldots & a^{(1)}_d-a^{(0)}_d \\
 a^{(2)}_2-a^{(0)}_2 & \ldots & a^{(2)}_d-a^{(0)}_d \\
& \vdots \\
 a^{(d)}_2-a^{(0)}_2 & \ldots & a^{(d)}_d-a^{(0)}_d 
\end{bmatrix} + \gamma(u_2,\dots,u_n)\right).
\end{align*}
where $\gamma(u_2,\dots,u_d)$ is a polynomial consisting of the quadratic terms in the remaining $u_i$'s.
Note that since $\gamma$ does not contain the variable $u_1$ and
$\rho$ is divisible by the quadric $u_1^2 + \ldots + u_d^2$ by \Cref{prop:degree}, it follows that 
\begin{equation}\label{eq:rho_cube_matrix}
\rho(u)  = \frac{u_1^2 + \ldots + u_d^2}{u_1} \ \det \begin{bmatrix}
 a^{(1)}_2-a^{(0)}_2 & \ldots & a^{(1)}_d-a^{(0)}_d \\
 a^{(2)}_2-a^{(0)}_2 & \ldots & a^{(2)}_d-a^{(0)}_d \\
& \vdots \\
 a^{(d)}_2-a^{(0)}_2 & \ldots & a^{(d)}_d-a^{(0)}_d 
\end{bmatrix}.
\end{equation}
Let $A$ be the $(d-1)\times (d-1)$-matrix appearing in this last expression \eqref{eq:rho_cube_matrix}.
Then finally, by the discussion in \Cref{section:algebraic_boundary}, the irreducible component of the algebraic boundary on the corresponding conical region $C$ is described by the linear equation $x_1 = |\det A|$.
\end{proof}

Note that for an arbitrary polytope $P$ of dimension at least $3$, the irreducible components of the algebraic boundary $\partial_a IP$ cannot all be linear. This is implied by the fact that the intersection body of a convex body is not a polytope. It is thus worth noting that the intersection body of the cube has remarkably many linear components. We now investigate the non-linear pieces of $\partial_a I\cube{4}$ of the $4$-dimensional cube.

\begin{example}
Let $P$ be the $4$-dimensional cube $[-1,1]^4$ and $IP$ be its intersection body. The associated hyperplane arrangement has $8 + 32 + 64 = 104$ chambers. The first $8$ are spanned by $6$ rays and the boundary here is linear, i.e. it is a $3$-dimensional cube. For example, the linear face exposed by $(1,0,0,0)$ is cut out by the hyperplane $w = 8$.

The second family of chambers is made of cones with $5$ extreme rays, where the boundary is defined by a cubic equation with shape
\begin{equation*}
    3 x y z - 3 w^2 - 6 x^2 - 12 x y - 6 y^2 - 12 x z + 12 y z - 6 z^2.
\end{equation*}
Finally there are $64$ cones spanned by $4$ rays such that the boundary of the intersection body is a quartic, such as
\begin{multline*}
    4 w x y z - w^3 - 3 w^2 x - 3 w x^2 - x^3 - 3 w^2 y - 6 w x y - 3 x^2 y - 3 w y^2 - 3 x y^2 \\
    - y^3 - 3 w^2 z - 6 w x z - 3 x^2 z + 18 w y z - 6 x y z - 3 y^2 z - 3 w z^2 - 3 x z^2 - 3 y z^2 - z^3.
\end{multline*}
\end{example}

\Cref{prop:cube-linear-components} gives a lower bound on the number of linear components of the algebraic boundary of $I\cube{d}$. We conjecture that for any $d \in \N$, the algebraic boundary of the intersection body of the $d$-dimensional cube centered at the origin has exactly $2d$ linear components. Computational results for $d \leq 5$ support this conjecture, as displayed in \Cref{tab:cube-degrees}. 
It shows the number of irreducible components of $I\cube{d}$ sorted by the degree of the component, for $d=2,3,4,5$. The first two columns are the dimension of the polytope, and the number of chambers of the respective hyperplane arrangement $H$. The third column is the degree bound from \Cref{cor:degree_cc}. The remaining columns show the number of regions whose equation in the algebraic boundary have degree $\deg$, for $\deg=1,\dots, 5$. 
\begin{table}[!ht]
\centering
\begin{tabular}{|c c c | r c c c c|}
    \hline
    dimension & \# chambers & degree bound & $\deg=1$ & $2$ & 3 & 4 & 5 \\ \hline
     2        &  4         & 1            & 4 & 0  & 0  &  0 &  0   \\
     3        & 14         & 5            & 6 & 0  & 8 &  0 &  0    \\
     4        & 104        & 14           & 8 &  0 & 32& 64&    0 \\
     5        & 1882       & 38           & 10 & 0  & 80 & 320 & 1472    \\
     \hline
\end{tabular}
\caption{Number of irreducible components of the algebraic boundary of the intersection body of the $d$-cube, listed by degree.}
\label{tab:cube-degrees}
\end{table}

It is worth noting that the highest degree attained in these examples is equal to the dimension of the respective cube. In particular, the degree bound for centrally symmetric polytopes, as given in \Cref{cor:degree_cc} is not attained in any of the cases for $d\geq 3$.
Finally, note that the number of regions grows exponentially in $d$, and thus for $d\geq 3$, the number of non-linear components exceeds the number of linear components.

\vspace{1cm}
\section*{Acknowledgments}

The authors would like to thank Rainer Sinn, Bernd Sturmfels and Simon Telen for many useful discussions and support. We are grateful to Michael Joswig and Lars Kastner for their time and their help with OSCAR. We also wish to thank the referee for their insight and feedback.  Last, thank you to the Max Planck Institute for Mathematics in the Sciences (MPI MiS) where the research for this project was done.

\newpage
\bibliographystyle{alpha}
\bibliography{references}

\vspace*{\fill}
\subsection*{Affiliations}
\vspace{0.2cm}
\noindent \textsc{Katalin Berlow} \\
\textsc{University of California, Berkeley} \\
\url{katalin@berkeley.edu} \\

\noindent \textsc{Marie-Charlotte Brandenburg} \\
\textsc{ Max Planck Institute for Mathematics in the Sciences \\
Inselstra{\ss}e 22, 04103 Leipzig, Germany} \\
 \url{marie.brandenburg@mis.mpg.de} \\

\noindent \textsc{Chiara Meroni} \\
\textsc{ Max Planck Institute for Mathematics in the Sciences \\
Inselstra{\ss}e 22, 04103 Leipzig, Germany} \\
\url{chiara.meroni@mis.mpg.de} \\

 \noindent \textsc{Isabelle Shankar} \\
\textsc{ Max Planck Institute for Mathematics in the Sciences \\
Inselstra{\ss}e 22, 04103 Leipzig, Germany} \\
\url{isabelle.shankar@mis.mpg.de}

\end{document}